\documentclass[12pt,reqno]{amsart}
\usepackage[pagebackref,colorlinks=true]{hyperref}
\usepackage{verbatim}
\usepackage{eucal,url,amssymb,stmaryrd,enumerate,amscd,}
\usepackage{amsfonts,amsmath,amsthm,amssymb,amscd,enumerate,eucal,url,stmaryrd}
\usepackage{mathtools}
\usepackage[margin=1in]{geometry}
\usepackage{xcolor}
\usepackage{tikz}
\usetikzlibrary{arrows.meta,positioning}

\usepackage{mathrsfs}

\numberwithin{equation}{section}

\newcommand{\R}{\mathbb{R}}

\newcommand{\Rp}{\R_{>0}}

\makeatother
\newtheorem{Assumption}{Assumption}[section]
\newtheorem{Theorem}{Theorem}[section]
\newtheorem{Lemma}{Lemma}[section]
\newtheorem{Proposition}{Proposition}[section]
\newtheorem{Corollary}{Corollary}[section]
\newtheorem{Definition}{Definition}[section]
\newtheorem{Remark}{Remark}[section]
\newtheorem{Example}{Example}[section]

\usepackage{amsthm}

\newtheoremstyle{axiomstyle}
  {}{}                 
  {\itshape}           
  {}                   
  {\bfseries}          
  {.}                  
  {0.5em}              
  {\thmname{#1}~\thmnumber{#2}\thmnote{ (#3)}} 
  
\theoremstyle{axiomstyle}

\newcommand{\C}{\mathbb{C}}

\begin{document}

\newcommand{\add}[1]{{\color{blue}#1}}
\newcommand{\del}[1]{{\color{red}#1}}
\newenvironment{added}{\begingroup\color{blue}}{\endgroup}
\newenvironment{deleted}{%
  \begingroup\color{red}%
  \renewcommand{\cite}[2][]{\relax}%
  \renewcommand{\label}[1]{\relax}%
  \let\ref\relax
  \let\eqref\relax
  \let\pageref\relax
  \renewcommand{\section}[1]{\par\medskip\noindent{\bfseries [deleted section] ##1}\par}%
  \renewcommand{\subsection}[1]{\par\medskip\noindent{\bfseries [deleted subsection] ##1}\par}%
  \renewcommand{\subsubsection}[1]{\par\medskip\noindent{\bfseries [deleted subsubsection] ##1}\par}%
  \renewcommand{\paragraph}[1]{\par\noindent{\bfseries [deleted paragraph] ##1}\par}%
  \renewcommand{\subparagraph}[1]{\par\noindent{\bfseries [deleted subparagraph] ##1}\par}%
  \renewenvironment{equation}{\[\ignorespaces}{\]\ignorespacesafterend}%
  \renewenvironment{theorem}[1][]{\par\medskip\noindent\textbf{[deleted theorem] }\ignorespaces}{\par\medskip}%
  \renewenvironment{lemma}[1][]{\par\medskip\noindent\textbf{[deleted lemma] }\ignorespaces}{\par\medskip}%
  \renewenvironment{proposition}[1][]{\par\medskip\noindent\textbf{[deleted proposition] }\ignorespaces}{\par\medskip}%
  \renewenvironment{corollary}[1][]{\par\medskip\noindent\textbf{[deleted corollary] }\ignorespaces}{\par\medskip}%
  \renewenvironment{definition}[1][]{\par\medskip\noindent\textbf{[deleted definition] }\ignorespaces}{\par\medskip}%
  \renewenvironment{remark}[1][]{\par\medskip\noindent\textbf{[deleted remark] }\ignorespaces}{\par\medskip}%
  \renewenvironment{example}[1][]{\par\medskip\noindent\textbf{[deleted example] }\ignorespaces}{\par\medskip}%
  \renewenvironment{notation}[1][]{\par\medskip\noindent\textbf{[deleted notation] }\ignorespaces}{\par\medskip}%
  \renewenvironment{convention}[1][]{\par\medskip\noindent\textbf{[deleted convention] }\ignorespaces}{\par\medskip}%
  \renewenvironment{axiom}[1][]{\par\medskip\noindent\textbf{[deleted axiom] }\ignorespaces}{\par\medskip}%
}{\endgroup}

\newcommand{\Poly}{\R[u,v]}

\begin{abstract}
We study functions satisfying the composition law $F(xy)+F(x/y)=P(F(x),F(y))$ with a symmetric polynomial combiner $P$.
We prove that symmetry together with a quadratic degree bound on $P$ {forces a composition
law of d'Alembert type.} We establish a degree mismatch exclusion criterion showing that symmetric polynomial combiners with {$\deg P(u,v) \ge 3$} do not admit nonconstant continuous solutions, provided the leading term does not cancel (Theorem~\ref{thm:degree-exclusion}).
For continuous nonconstant functions $F:\mathbb{R}_{>0}\to\mathbb{R}$ with $F(1)=0$
satisfying  the composition law with a symmetric polynomial $P$ of degree at most two,
the combiner is necessarily of the form $P(u,v)=2u+2v+c\,uv$, $c\in\mathbb{R}$ (Theorem~\ref{thm:bilinear-forced}). The equation reduces in logarithmic coordinates to the classical d'Alembert functional equation.
For $c\neq 0$, one obtains hyperbolic or trigonometric branches,
while $c=0$ yields the squared-logarithm family.
Under the cost-function assumptions $F\ge 0$ and convexity,
only the hyperbolic branch with $c>0$ remains.
A unit log-curvature calibration selects the canonical value $c=2$,
which yields the canonical reciprocal cost $F(x)=\tfrac12(x+x^{-1})-1$.
For $c\neq0$, the result extends to $\mathbb{R}_{>0}^n$: every solution depends only on a single linear combination of coordinate logarithms; for $c=0$, the solution is a general quadratic form $\sum_{i,j}a_{ij}\ln x_i\ln x_j$.
In either case, nontrivial coordinate-wise separable costs are excluded.

\vskip1mm
\noindent
\textbf{Keywords}: d'Alembert functional equation; polynomial composition law; reciprocal cost; curvature calibration; separability; rigidity

%




\vskip1.mm
\noindent
\textbf{MSC (2020)}: 39B52, 39B05, 39B82, 26A51. 
\end{abstract}

\title[The d'Alembert Inevitability Theorem]{The d'Alembert Inevitability Theorem}
\date{\today}

\author{Jonathan Washburn}
\address[Jonathan Washburn]{Recognition Physics Institute Austin, Texas, USA}
\email{jon@recognitionphysics.org}
\author{Milan Zlatanovi\'c}
\address[Milan Zlatanovi\'c]{Department of Mathematics, Faculty of Science and Mathematics, University of Ni\v s, Vi\v segradska 33, 18000 Ni\v s, Serbia}
\email{zlatmilan@yahoo.com}

\author{Elshad Allahyarov}
\address[Elshad Allahyarov]{Recognition Physics Institute, Austin, TX, USA \\Institut für Theoretische Physik II: Weiche Materie, Heinrich-Heine-Universität Düsseldorf, Germany \\ 
Theoretical Department, Joint Institute for High Temperatures, RAS, Moscow, Russia \\  Department of Physics, Case Western Reserve University, Cleveland, OH, USA }
\email{elshad.allakhyarov@case.edu}

\maketitle 

\setcounter{tocdepth}{3}


\newcommand{\config}{\mathcal{C}}
\newcommand{\configR}{\mathcal{C}_R}

\section{Introduction}

Functional equations often arise when one requires that a quantity associated with ratios behaves consistently under multiplicative composition. 
Such consistency principles appear naturally in many contexts, including the theory of {functional equations} 
~\cite{Aczel,Davison, EbanksStetkaer, Hyers,Kuczma}, information geometric theory involving multiplicative models, and~models of ratio-based~costs.

The classical d'Alembert functional {equation} 

\[
H(t+u)+H(t-u)=2H(t)H(u)
\]
{is one of} 
 the central equations in the theory of functional equations. {The origin goes back to d'Alembert's derivation of the parallelogram law of forces~\cite{dAlembert1769}. Poisson~\cite{Poisson1804} gave a rigorous treatment of solutions, and~Picard~\cite{Picard1922} studied its relation with non-Euclidean geometry.}
Its continuous solutions are the cosine-type functions $\cosh(\alpha t)$
and $\cos(\alpha t)$ (see~\cite{Aczel,Kuczma,StetkaerBook}).
Many nonlinear functional relations reduce to this equation after suitable~transformations.

In~\cite{WZ1}, a~rigidity result for 
\( F : \mathbb{R}_{>0} \to \mathbb{R} \) is obtained.
Assuming the polynomial composition law
\begin{equation}\label{clp}
F(xy)+F\Big(\frac xy\Big )= 2F(x)F(y)+2F(x)+2F(y),
\end{equation}
together with the curvature calibration
\[
\lim_{t \to 0} \frac{2F(e^t)}{t^2} = 1,
\]
the function \( F \) is uniquely determined.
The unique solution is the canonical reciprocal cost
\[
F(x)=\tfrac12\bigl(x+x^{-1}\bigr)-1.
\]

This raises a natural structural question.
Is the composition law \eqref{clp} merely a modeling assumption, {as it appears in various contexts in the literature (see, e.g.,~\cite{AczelDhombres,StetkaerBook}),}
or is it forced by more general consistency requirements?

In this paper, we study functional relations of the form
\begin{equation}\label{inev}
F(xy)+F\!\left(\frac{x}{y}\right)
=
P\big(F(x),F(y)\big),
\qquad x,y>0,
\end{equation}
where $P$ is a polynomial combiner. 
We investigate which polynomial laws admit nontrivial continuous~solutions.

In applications, one often interprets $F(x)$ as a cost or penalty associated
with a ratio $x>0$.  { 
The normalization $F(1)=0$ reflects that the identity element carries zero deviation. 
It does not restrict generality, since any solution can be reduced to this case by subtracting a constant, with~a corresponding translation of the variables in $P$.
More precisely, if~$\widetilde{F}(x):=F(x)-F(1)$, then $\widetilde{F}(1)=0$ and
\[
\widetilde{F}(xy)+\widetilde{F}\!\left(\frac{x}{y}\right)
=
\widetilde{P}\bigl(\widetilde{F}(x),\widetilde{F}(y)\bigr),
\qquad
\widetilde{P}(u,v):=P\bigl(u+F(1),v+F(1)\bigr)-2F(1).
\]
Thus, the~assumption $F(1)=0$ is without loss of generality.} This condition determines the boundary identities
\[
P(u,0)=2u, \qquad P(0,v)=2v,
\]
which restrict the form of the polynomial combiner.
Requiring compatibility with multiplicative composition then leads 
to a d'Alembert-type functional equation on $\Rp$.

{ We assume that $F:\mathbb{R}_{>0}\to\mathbb{R}$ is continuous and nonconstant, and~that $P\in\mathbb{R}[u,v]$ is a symmetric polynomial, where $\mathbb{R}[u,v]$ denotes the ring of polynomials in two variables with real coefficients.}
Symmetry is natural since the roles of $x$ and $y$
in the left-hand side of~\eqref{inev} are interchangeable.
Under symmetry of $P$, we derive reciprocity 
$F(x)=F(1/x)$.

Our first result treats the case of higher-degree polynomial combiners.
We show that symmetric polynomial combiners of degree $d\ge3$
are incompatible with the functional Equation \eqref{inev}.
More precisely, if~$P\in\R[u,v]$ is symmetric, satisfies $P(0,v)=2v$,
and its leading term does not cancel on the diagonal,
then Equation \eqref{inev}
admits no continuous nonconstant solution $F:\Rp\to\R$ with $F(1)=0$.
Consequently, only polynomial combiners of degree at most two
can admit nontrivial continuous solutions.
This reduction to the quadratic case is the main structural step of the~paper.

Our main structural result shows that in the quadratic case
the composition law is completely determined.
If $F:\Rp\to\R$ is continuous and nonconstant and the combiner $P$
is a symmetric polynomial of degree at most two, then necessarily
\[
P(u,v)=2u+2v+c\,uv,
\qquad c\in\R.
\]
Under the normalization $F(1)=0$, the functional equation therefore reduces to
\[
F(xy)+F\Big(\frac xy\Big)
=
2F(x)+2F(y)+c\,F(x)F(y).
\]

Passing to logarithmic coordinates reduces this relation to the classical d'Alembert equation
\[
H(t+u)+H(t-u)=2H(t)H(u),
\]
whose continuous solutions are well known
~\cite{Aczel,AczelDhombres,Czerwik,Kannappan2,Kuczma,Papp,StetkaerBook}.
All continuous solutions of the original equation can therefore be described~explicitly.

Convexity and nonnegativity select the hyperbolic branch {(Corollary~\ref{cor:convex-case})},
while a curvature normalization determines the distinguished value $c=2$.
In this case, the canonical reciprocal cost
\[
F(x)=\dfrac12(x+x^{-1})-1
\]
appears as a structurally determined solution~\cite{WZ1}.

{ Finally, we extend the analysis to functions on $\mathbb{R}_{>0}^n := \{(x_1,\dots,x_n)\in\mathbb{R}^n : x_i > 0\}$.
In the multidimensional case, passing to logarithmic coordinates reduces the problem to a functional equation on $\mathbb{R}^n$ involving sums and differences. 
Such equations are known to exhibit a collapse to one-dimensional dependence, as~in the classical case (see, e.g.,~\cite{AczelDhombres,StetkaerBook}). 
We show that, for~$c\neq 0$, solutions {depend on} 
 $\mathbf{x}$ only through the scalar quantity $\boldsymbol{\alpha}\cdot \ln \mathbf{x}$,  where $\mathbf{x}=(x_1,\dots,x_n)\in\mathbb{R}_{>0}^n$ denotes a vector variable, and~$\boldsymbol{\alpha}=(\alpha_1,\dots,\alpha_n)\in\R^n$ is a vector of weights.
Thus, the effective dependence remains one-dimensional.}

The paper is organized as follows.
In Section~\ref{sec:preliminaries},
we study structural consequences of the polynomial composition law.
In particular, we prove reciprocity of $F$ under symmetry of the combiner
and derive the boundary identities that restrict the polynomial $P$.

Section~\ref{sec:polynomial-classification}
contains the classification of admissible polynomial combiners.
We show that symmetric combiners of degree at least three
do not admit nonconstant continuous solutions under a natural
non-cancellation assumption.
In the quadratic case, we obtain the bilinear form of the combiner. In~Section~\ref{sec:reduction},
we pass to logarithmic coordinates and reduce the equation
to the classical d'Alembert functional equation.
Using the known classification of its continuous solutions,
we obtain the corresponding families of functions $F$.

{In Section~\ref{sec:Ndim},
we consider the multidimensional case.
We show that for $c\neq0$, every solution depends only on
the scalar quantity $\boldsymbol{\alpha}\cdot\ln\mathbf{x}$. Thus, even in dimension $n$, the effective dependence
is one-dimensional through the quantity
$\boldsymbol{\alpha}\cdot\ln\mathbf{x}$.
 We give an explicit \mbox{16-dimensional}
example (Example~\ref{ex:16dim}) illustrating the collapse to a single logarithmic~direction.}

In the final section, we introduce a normalization
based on the log-curvature
\(
\kappa(F).
\)
We show that this calibration fixes the parameter of the bilinear
composition law.
For convex nonnegative solutions with $\kappa(F)=1$,
the parameter is uniquely determined by $c=2$,
which yields the canonical reciprocal~cost.

{ \medskip

\noindent {\bf {Main contributions of the paper.}
}
The main results of the paper are the~following:
\begin{itemize}
\item[(i)] We show that symmetric polynomial combiners of degree $d\ge 3$ do not admit nonconstant continuous solutions under a natural non-cancellation assumption.
\item[(ii)] We prove that in the case $\deg P\le 2$, the~polynomial combiner is of bilinear d'Alembert type, namely $P(u,v)=2u+2v+c\,uv$.
\item[(iii)]  We reduce the functional equation to the classical d'Alembert equation and give a complete classification: all continuous solutions form three families: the hyperbolic branch $F(e^t)=\tfrac{2}{c}(\cosh(\alpha t)-1)$, the~trigonometric branch $F(e^t)=\tfrac{2}{c}(\cos(\alpha t)-1)$ (for $c\neq 0$), and~the quadratic-logarithm family $F(x)=k(\ln x)^2$ (for $c=0$).
\item[(iv)] We extend the result to the $n$-dimensional case and show that every solution depends only on a single linear combination of the logarithmic variables.
\end{itemize} 

The main difficulty lies in the analysis of the equation with a general polynomial combiner, where a degree-based argument and a non-cancellation condition are used to exclude higher-degree cases.}

\section{Preliminaries}\label{sec:preliminaries}

We consider a function $F:\Rp\to\R$.
Passing to logarithmic coordinates, we define
\begin{equation}\label{eq:def-G}
G(t):=F(e^t), \qquad t\in\R.
\end{equation}

\begin{Definition}\label{def:poly-law}
A function $F:\Rp \to \R$ is said to satisfy a {polynomial composition law} 
if there exists a polynomial $P \in \R[u,v]$ such that
\begin{equation}\label{eq:poly-law}
F(xy) + F\!\left(\frac{x}{y}\right)
=
P\big(F(x), F(y)\big)
\qquad \text{for all } x,y>0.
\end{equation}
Such a polynomial $P$ is called a {combiner}.
\end{Definition}

In the following, we assume that \eqref{eq:poly-law} holds
with a symmetric polynomial $P \in \R[u,v]$. In~logarithmic coordinates $x=e^{t}$, $y=e^{u}$,
Equation \eqref{eq:poly-law} becomes
\begin{equation}\label{eq:log-law}
G(t+u)+G(t-u)
=
P\big(G(t),G(u)\big),
\qquad t,u\in\R.
\end{equation}
Note that \eqref{eq:poly-law} determines $P$
only on the subset
\[
\{(F(x),F(y)) : x,y \in \mathbb{R}_{>0}\}
\subseteq \mathbb{R}^2,
\]
that is,  on~$\operatorname{Range}(F)\times\operatorname{Range}(F)$.

We assume that $F$ is continuous on $\mathbb{R}_{>0}$ and nonconstant.
Then $\operatorname{Range}(F)$ contains a nontrivial interval.
Consequently, if~a polynomial $P(u,v)$ vanishes on 
$\operatorname{Range}(F)\times\operatorname{Range}(F)$,
then $P$ vanishes identically on $\mathbb{R}^2$.

\begin{Remark}\label{rem:regularity} Continuity is used to ensure that $\operatorname{Range}(F)$
contains a nondegenerate interval.
All results, such as reciprocity of $F$, the~boundary conditions, and~the bilinear form of $P$,  remain valid if $\operatorname{Range}(F)$ is dense in some interval.
In particular, continuity can be replaced by measurability together with local boundedness, since measurable solutions of the d'Alembert equation are continuous~\cite{Akkouchi,Kannappan2, StetkaerBook}.
\end{Remark}

\subsection{{Structural} 
 Properties}\label{sec:first-forcing-layer}

We now derive structural consequences of the polynomial composition law \eqref{eq:poly-law}.
We  establish the equivalence between symmetry of $P$ and reciprocity of $F$. The~normalization $F(1)=0$ implies that the boundary values of $P$ satisfy
\(
P(0,v)=2v\) and \(
P(u,0)=2u.
\)
{ The symmetry assumption $P(u, v) = P(v, u)$ is not arbitrary. It is the algebraic counterpart of~reciprocity.} 

As shown in the next lemma, symmetry of the combiner
implies reciprocal symmetry of $F$.
This expresses invariance under inversion of the ratio,
a natural structural property for a discrepancy~measure.

\begin{Lemma}\label{lem:swap-quotients}
If $F$ satisfies Equation \eqref{eq:poly-law} with a symmetric combiner $P$, then
\[
F(z)=F\Big(\frac{1}{z}\Big)
\qquad\text{for all}\; z>0.
\]
\end{Lemma}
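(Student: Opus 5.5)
The plan is to swap the roles of $x$ and $y$ in \eqref{eq:poly-law} and compare the two resulting identities using the symmetry of $P$. For fixed $x,y>0$, \eqref{eq:poly-law} gives
\[
F(xy)+F\!\left(\frac{x}{y}\right)=P\big(F(x),F(y)\big).
\]
Interchanging $x$ and $y$ gives
\[
F(yx)+F\!\left(\frac{y}{x}\right)=P\big(F(y),F(x)\big).
\]
Since $P(u,v)=P(v,u)$, the right-hand sides agree. Since $xy=yx$, the first terms on the left also cancel. Hence
\[
F\!\left(\frac{x}{y}\right)=F\!\left(\frac{y}{x}\right)\qquad\text{for all } x,y>0.
\]

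Given $z>0$, take $x=z$ and $y=1$ (or any $y$ with $x/y=z$). This yields $F(z)=F(1/z)$, which is the claim.

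There is no real obstacle here. The argument uses only symmetry of $P$ and commutativity of multiplication on $\Rp$; it needs neither continuity nor the normalization $F(1)=0$. The one point to check is that every $z>0$ arises as a quotient $x/y$, and the choice $y=1$ settles this. In logarithmic coordinates the same computation applied to \eqref{eq:log-law} gives $G(t-u)=G(u-t)$, so $G$ is even. This is the form in which reciprocity will be used in the reduction to the d'Alembert equation.
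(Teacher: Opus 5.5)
Your proof is correct and is essentially the paper's own argument: swap $x$ and $y$, cancel $F(xy)=F(yx)$ and the equal right-hand sides $P(F(x),F(y))=P(F(y),F(x))$ to get $F(x/y)=F(y/x)$, then set $y=1$. You are also right that this step uses neither continuity nor the normalization $F(1)=0$.
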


\begin{proof}
Writing Equation \eqref{eq:poly-law} for $(x,y)$ and $(y,x)$ gives
\[
F(xy)+F\Big(\frac xy\Big)=P(F(x),F(y)),
\]
\[
F(yx)+F\Big(\frac yx\Big)=P(F(y),F(x)).
\]
Since $xy=yx$ and $P$ is symmetric, subtracting yields
\[
F\Big(\frac xy\Big)=F\Big(\frac yx\Big).
\]
If we set $y=1$ in the last equation, we obtain $F(x)=F(1/x)$, for~all $x>0$.
\end{proof}

\begin{Lemma}\label{lem2}
Assume $F$ satisfies Equation \eqref{eq:poly-law} where $P$ is a polynomial.
If $F$ is continuous, nonconstant, and~reciprocal-symmetric, then $P$ is symmetric.
\end{Lemma}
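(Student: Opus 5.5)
The plan is to run the argument of Lemma~\ref{lem:swap-quotients} in reverse: reciprocity of $F$ makes the left-hand side of \eqref{eq:poly-law} symmetric in $(x,y)$. That makes the polynomial $Q(u,v):=P(u,v)-P(v,u)$ vanish on $\operatorname{Range}(F)\times\operatorname{Range}(F)$, and the identity principle from the preliminaries then forces $Q\equiv 0$.

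\emph{Step 1.} Fix $x,y>0$. Reciprocity gives $F(y/x)=F\bigl((x/y)^{-1}\bigr)=F(x/y)$. Since also $yx=xy$,
\[
F(xy)+F\Big(\frac xy\Big)=F(yx)+F\Big(\frac yx\Big).
\]
Applying \eqref{eq:poly-law} to $(x,y)$ and to $(y,x)$ turns this into $P(F(x),F(y))=P(F(y),F(x))$. Hence $Q(F(x),F(y))=0$ for all $x,y>0$, so $Q$ vanishes on $R\times R$ with $R=\operatorname{Range}(F)$.

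\emph{Step 2.} Since $F$ is continuous and nonconstant on the connected set $\Rp$, $R$ is an interval containing some nondegenerate interval $I$. This is the only step with real content, and it is the fact already noted before Remark~\ref{rem:regularity}. For completeness I would prove it directly. Write $Q(u,v)=\sum_k q_k(u)v^k$. For each fixed $u\in I$, the one-variable polynomial $v\mapsto Q(u,v)$ has infinitely many zeros, so every coefficient $q_k(u)$ vanishes. Thus each $q_k$ vanishes on the infinite set $I$, so each $q_k$ is the zero polynomial, and $Q\equiv 0$ in $\R[u,v]$.

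\emph{Conclusion.} Therefore $P(u,v)=P(v,u)$ identically, i.e.\ $P$ is symmetric. No step is a genuine obstacle. The only point needing care is that symmetry is obtained as a polynomial identity on all of $\R^2$, not merely on $R\times R$, and Step 2 supplies exactly this. Continuity could be weakened as in Remark~\ref{rem:regularity}: it suffices that $R$ is infinite, since the identity principle only needs infinitely many zeros in each variable.
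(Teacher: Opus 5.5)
Your proof is correct and follows the paper's argument: reciprocity makes the left-hand side of \eqref{eq:poly-law} symmetric in $(x,y)$, so $Q(u,v)=P(u,v)-P(v,u)$ vanishes on $\operatorname{Range}(F)\times\operatorname{Range}(F)$ and hence identically. The only difference is that you prove the identity principle explicitly through the coefficients $q_k$, which also shows an infinite range suffices; the paper simply notes that $Q$ vanishes on an open subset of $\R^2$.
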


\begin{proof}
Using reciprocity, we obtain
\[
P(F(x),F(y))=P(F(y),F(x))
\qquad \text{for all }x,y>0.
\]
Let $Q(u,v)=P(u,v)-P(v,u)$.
Then $Q(F(x),F(y))=0$.
Since $\operatorname{Range}(F)$ contains an interval,
$Q$ vanishes on an open subset of $\R^2$,
hence $Q\equiv 0$.
\end{proof}

\begin{Remark} Under the polynomial assumption and nondegeneracy of the range (e.g., $F$ continuous and nonconstant),
symmetry of $P$ and reciprocity of $F$ are equivalent.
\end{Remark}

\medskip
With reciprocity established, the~next lemma determines the boundary values of $P$ at~$F(1)$.

\begin{Lemma}\label{lem:axis-at-F1}
Let $F$ be continuous and nonconstant satisfying Equation \eqref{eq:poly-law}
with symmetric $P$.
Then
\[
P(F(1),v)=2v,
\qquad
P(u,F(1))=2u
\quad\text{for all }u,v\in\R.
\]
\end{Lemma}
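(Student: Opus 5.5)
Set $y=1$ in \eqref{eq:poly-law}. Since $x\cdot 1=x$ and $x/1=x$, the left-hand side becomes $2F(x)$, so
\[
P\bigl(F(x),F(1)\bigr)=2F(x)\qquad\text{for all }x>0.
\]
Introduce the one-variable polynomial
\[
q(u):=P\bigl(u,F(1)\bigr)-2u\in\R[u].
\]
The identity above says that $q$ vanishes on $\operatorname{Range}(F)$. Because $F$ is continuous and nonconstant, $\operatorname{Range}(F)$ contains a nondegenerate interval, as noted after Definition~\ref{def:poly-law}. A polynomial in one variable with infinitely many zeros is identically zero, so $q\equiv 0$. This gives $P(u,F(1))=2u$ for all real $u$, and not just for $u$ in the range of $F$.

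For the other identity we use the symmetry of $P$: $P(F(1),v)=P(v,F(1))=2v$ for all $v\in\R$. Alternatively, one can set $x=1$ in \eqref{eq:poly-law} and use Lemma~\ref{lem:swap-quotients} to write $F(1/y)=F(y)$. The left-hand side is then $F(y)+F(1/y)=2F(y)$, and the same vanishing argument applied to $v\mapsto P(F(1),v)-2v$ finishes the proof.

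The only point that needs care is extending the identity from $\operatorname{Range}(F)$ to all of $\R$. This is exactly where continuity and nonconstancy are used: they guarantee infinitely many points on which a polynomial vanishes. Everything else is direct substitution.
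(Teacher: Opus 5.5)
Your proof is correct, and it differs from the paper's in one small but real respect. The paper substitutes $x=1$, which gives $F(y)+F(1/y)=P(F(1),F(y))$. It therefore needs the reciprocity $F(y)=F(1/y)$ from Lemma~\ref{lem:swap-quotients}, which itself rests on the symmetry of $P$, before the polynomial-vanishing argument applies. It then uses symmetry a second time to obtain $P(u,F(1))=2u$.

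Your substitution $y=1$ makes the left-hand side collapse to $2F(x)$ at once. So the identity $P(u,F(1))=2u$ holds for every polynomial combiner, symmetric or not, and symmetry is used only once, to move the identity to the first slot. Both routes share the same key step: a one-variable polynomial vanishing on the nondegenerate interval contained in $\operatorname{Range}(F)$ is identically zero. Yours also shows more precisely where the symmetry hypothesis is actually needed.
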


\begin{proof}
Setting $x=1$ in Equation \eqref{eq:poly-law} gives
\[
F(y)+F\Big(\frac 1y\Big)=P(F(1),F(y)).
\]
By reciprocity, $F(y)=F(1/y)$, hence
\[
2F(y)=P(F(1),F(y)).
\]
Let us define the polynomial $q(v)=P(F(1),v)-2v$.
Then $q(F(y))=0$.
Since the range of $F$ contains an interval, $q\equiv 0$.
Symmetry gives the second identity.
\end{proof}

\begin{Corollary}\label{lem:boundary-conditions}
Under the assumptions of Lemma \ref{lem:axis-at-F1} and the additional condition $F(1)=0$, it follows~that
\[
P(u,0)=2u,
\qquad
P(0,v)=2v.
\]
\end{Corollary}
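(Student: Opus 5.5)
This is a direct specialization of Lemma~\ref{lem:axis-at-F1}. That lemma already gives, for a continuous nonconstant $F$ satisfying \eqref{eq:poly-law} with a symmetric combiner, the polynomial identities
\[
P(F(1),v)=2v,\qquad P(u,F(1))=2u
\]
for all $u,v\in\R$. We impose the normalization $F(1)=0$ and substitute it into both identities. This yields $P(0,v)=2v$ and $P(u,0)=2u$ for all real $u,v$, which is exactly the claim.

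For completeness, here is the underlying mechanism, which could also be rerun directly. Set $x=1$ in \eqref{eq:poly-law}. Using reciprocity from Lemma~\ref{lem:swap-quotients}, this gives $2F(y)=P(0,F(y))$. Hence the one-variable polynomial $q(v)=P(0,v)-2v$ vanishes on $\operatorname{Range}(F)$. Because $F$ is continuous and nonconstant, its range contains a nondegenerate interval. A polynomial with infinitely many zeros is identically zero, so $q\equiv0$. The second identity then follows from the symmetry $P(u,0)=P(0,u)$.

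There is no real obstacle here. The only point needing care is that the identities hold for all real arguments and not merely on $\operatorname{Range}(F)$. That extension is supplied by the interval-in-range argument already used in the proof of Lemma~\ref{lem:axis-at-F1}.
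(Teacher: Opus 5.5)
Your proposal is correct and matches the paper, which gives no separate proof and treats the corollary as the immediate specialization $F(1)=0$ of the identities $P(F(1),v)=2v$ and $P(u,F(1))=2u$ from Lemma~\ref{lem:axis-at-F1}. Your recap of the underlying mechanism (set $x=1$, use reciprocity, then conclude that a polynomial vanishing on an interval is identically zero) is the same argument as that lemma's proof.
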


\medskip

We now factor out the constraints at $F(1)$.

\begin{Lemma}\label{lem:shift-factor-F1}
Let $F$ satisfy Equation \eqref{eq:poly-law} with polynomial symmetric $P$.
Assume $F$ is continuous and nonconstant.
Then there exists $R\in\R[u,v]$ such that
\begin{equation}\label{eddd}
P(u,v)
=
2u+2v-2F(1)
+
(u-F(1))(v-F(1))\,R(u,v).
\end{equation}
\end{Lemma}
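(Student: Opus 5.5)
The plan is to shift coordinates so that $F(1)$ sits at the origin, and then use the boundary identities of Lemma~\ref{lem:axis-at-F1} to read off divisibility by each shifted coordinate. Set $a:=F(1)$ and define the polynomial
\[
S(s,w):=P(s+a,\,w+a)-2(s+a)-2(w+a)+2a\in\R[s,w],
\]
so that $P(u,v)=2u+2v-2a+S(u-a,v-a)$. The claim is then equivalent to $S(s,w)=s\,w\,\widetilde R(s,w)$ for some polynomial $\widetilde R$. Once that is shown, we take $R(u,v):=\widetilde R(u-a,v-a)$, which is again a polynomial.

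First, Lemma~\ref{lem:axis-at-F1} gives $P(a,v)=2v$ for all real $v$. Substituting $s=0$, we get
\[
S(0,w)=P(a,w+a)-2a-2(w+a)+2a=2(w+a)-2w-2a=0
\]
for all $w$. Symmetrically, $P(u,a)=2u$ gives $S(s,0)=0$ for all $s$.

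Next, we use the divisibility fact in $\R[s,w]=\R[w][s]$. Since $S(0,w)\equiv 0$ as a polynomial in $w$, the constant term in $s$ vanishes, so $s$ divides $S$; write $S=s\,T(s,w)$. Then $0=S(s,0)=s\,T(s,0)$ holds identically. Because $\R[s]$ is an integral domain, $T(s,0)\equiv0$, and hence $w$ divides $T$. So $S=s\,w\,\widetilde R$, which is the claim.

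There is no real obstacle here. The only point requiring care is that Lemma~\ref{lem:axis-at-F1} gives the boundary identities for all real $u,v$ as polynomial identities, not merely on the range of $F$. That lemma already used continuity and nonconstancy to pass from the range to all of $\R$, so the divisibility argument applies directly to the polynomials.
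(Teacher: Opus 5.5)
Your proof is correct and follows the paper's argument. Both use the identities $P(F(1),v)=2v$ and $P(u,F(1))=2u$ from Lemma~\ref{lem:axis-at-F1} to show that $P(u,v)-2u-2v+2F(1)$ vanishes on both axes through $(F(1),F(1))$, and then factor out $u-F(1)$ and $v-F(1)$ in succession; your shift to $s=u-F(1)$, $w=v-F(1)$ and the integral-domain justification for cancelling $s$ are only cosmetic refinements of the same step.
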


\begin{proof}
Let us define polynomial
\[
S(u,v)=P(u,v)-2u-2v+2F(1).
\]
By Lemma~\ref{lem:axis-at-F1}, we have
\[
S(F(1),v)=0,
\qquad
S(u,F(1))=0.
\]
Fix $v$. Since $S(F(1),v)=0$,
it follows that $(u-F(1))$ divides $S(u,v)$.
Hence, there exists a polynomial $T\in\R[u,v]$ such that
\[
S(u,v)=\big(u-F(1)\big)\,T(u,v).
\]
Now evaluate $S(u,v)$ at $v=F(1)$. We obtain
\[
0=S(u,F(1))
=\big(u-F(1)\big)\,T(u,F(1))
\qquad\text{for all }u.
\]
Since this identity holds for all $u$, it follows that
\[
T(u,F(1))=0 \quad \text{for all }u.
\]
Thus, $(v-F(1))$ divides $T(u,v)$.
Therefore, there exists $R\in\R[u,v]$ such that
\[
T(u,v)=\big(v-F(1)\big)\,R(u,v).
\]

Combining the two factorizations yields
\[
S(u,v)
=
\big(u-F(1)\big)\big(v-F(1)\big)\,R(u,v),
\]
which proves Equation \eqref{eddd}.
\end{proof}

\begin{Corollary}\label{cor:factor-F1-zero}
Under the assumptions of Lemma \ref{lem:shift-factor-F1} and the additional condition $F(1)=0$,
there exists a polynomial $R\in\R[u,v]$ such that
\[
P(u,v)=2u+2v+uv\,R(u,v)
\qquad \text{for all }u,v\in\R.
\] 
\end{Corollary}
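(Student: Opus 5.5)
This corollary is the special case $F(1)=0$ of Lemma~\ref{lem:shift-factor-F1}, so the plan is to specialize identity~\eqref{eddd}. I will check that the hypotheses match, substitute $F(1)=0$, and simplify.

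First, the assumptions of Lemma~\ref{lem:shift-factor-F1} are in force by hypothesis: $P$ is a symmetric polynomial, and $F$ is continuous, nonconstant, and satisfies \eqref{eq:poly-law}. The lemma therefore provides a polynomial $R\in\R[u,v]$ with
\[
P(u,v)=2u+2v-2F(1)+\bigl(u-F(1)\bigr)\bigl(v-F(1)\bigr)R(u,v)
\]
as an identity in $\R[u,v]$. Substituting $F(1)=0$ makes the constant term $-2F(1)$ vanish, and the two linear factors become $u$ and $v$. This yields $P(u,v)=2u+2v+uv\,R(u,v)$.

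The identity holds for all $(u,v)\in\R^2$, not only on $\operatorname{Range}(F)\times\operatorname{Range}(F)$. The reason is that \eqref{eddd} is an equality of polynomials, established through the divisibility arguments in the proof of Lemma~\ref{lem:shift-factor-F1}. Those arguments in turn rest on Lemma~\ref{lem:axis-at-F1}, which upgraded identities valid on the range of $F$ to polynomial identities by using that the range contains an interval.

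There is no real obstacle here. The only point worth a line in the write-up is this global validity, and it is already supplied by the earlier lemmas. As a consistency check, one can alternatively start from Corollary~\ref{lem:boundary-conditions}: $S(u,v)=P(u,v)-2u-2v$ vanishes on both axes, so $u\mid S$ and $v\mid S$, and since $u$ and $v$ are coprime, $uv\mid S$.
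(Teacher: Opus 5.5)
Your proposal is correct and matches the paper, which treats this corollary as the immediate specialization $F(1)=0$ of the polynomial identity \eqref{eddd} from Lemma~\ref{lem:shift-factor-F1}. Your remark on global validity over $\R^2$, which rests on Lemma~\ref{lem:axis-at-F1}, and your alternative check via the coprimality of $u$ and $v$ in $\R[u,v]$ are both sound.
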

Since $P$ is symmetric, it follows that $R$ is also~symmetric.
 \section{Polynomial~Classification}\label{sec:polynomial-classification}

In this section, we classify the possible polynomial combiners $P$.



Using the factorization~\eqref{eddd} from Lemma~\ref{lem:shift-factor-F1}, 
together with the symmetry of $P$ and the boundary conditions from 
Section~\ref{sec:first-forcing-layer}, we conclude that the polynomial $R$ is symmetric. 
Indeed, the~term $2u+2v-2F(1)$ is already symmetric in $(u,v)$, and~therefore, the symmetry 
of $P$ forces $R$ in~\eqref{eddd} to be symmetric as well.
\medskip

At this point, the~degree of $R$ is not restricted.
If $R$ has high degree, then the functional equation becomes structurally more complex.
Using \eqref{eddd},~Equation \eqref{eq:log-law} becomes
\begin{align*}
G(t+u)+G(t-u)
=&
2G(t)+2G(u)-2F(1)
\\&+\big(G(t)-F(1)\big)\big(G(u)-F(1)\big)\,R\big(G(t),G(u)\big).
\end{align*}
Assume that $G$ is smooth in a neighborhood of $0$. Then both sides of the above equation admit Taylor expansions at $(t,u)=(0,0)$ in the form of  convergent power series
\[
G(t+u)+G(t-u)=\sum_{k,l\ge 0} A_{kl}\, t^k u^l,\]
\[2G(t)+2G(u)-2F(1)
+\big(G(t)-F(1)\big)\big(G(u)-F(1)\big)R\big(G(t),G(u)\big)
=\sum_{k,l\ge 0} B_{kl}\, t^k u^l.
\]
By uniqueness of power series expansions, we obtain $A_{kl}=B_{kl}$ for all $k,l\ge 0$. 
This yields an infinite system of algebraic relations between the derivatives of $G$ at $0$ and the coefficients of the polynomial $R$. Consequently, without~imposing a bound on the degree of $P$, the~classification problem leads to an infinite system of compatibility conditions.

For this reason, we restrict our attention to polynomial combiners
of total degree at most two. Since $(u-F(1))(v-F(1))$ already has degree two,
it follows that $R$ in (\ref{eddd}) must be~constant.

\begin{Assumption}\label{ass:degree-two}
The combiner $P$ has total degree at most two.
\end{Assumption}



\begin{Theorem}\label{thm:degree-exclusion}
Let $P\in\R[u,v]$ be a symmetric polynomial of degree $d\ge 3$ with $P(0,v)=2v$,
and let $q(y):=P(y,y)$. 
Assume that {$\deg_y q = d$, where the index $y$ indicates the variable with respect to which the degree is taken,}
and that no cancellation occurs in the leading term, so that
\[
\deg_y\bigl(P(P(q(y),y)-y,y)\bigr)=d^3-2d^2+2d.
\]
Then there is no continuous nonconstant function $F:\Rp\to\R$ with $F(1)=0$
satisfying the polynomial composition law
\[
F(xy)+F\Big(\frac xy\Big)=P(F(x),F(y)).
\]
In particular, the~explicit degree-three combiner treated in
Example~\ref{exmldeg3} is excluded.
\end{Theorem}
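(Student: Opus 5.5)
The plan is to specialize the logarithmic form \eqref{eq:log-law}, $G(t+u)+G(t-u)=P(G(t),G(u))$ with $G(t)=F(e^t)$ and $G(0)=F(1)=0$, along the arithmetic progression $t,2t,3t,4t$. This expresses $G(4t)+G(2t)$ in two different ways as polynomials in the single quantity $y:=G(t)$. Because $F$ is continuous and nonconstant, $y$ ranges over a nondegenerate interval, so the two polynomial expressions must agree identically in $\R[y]$. The degree hypotheses will then make this impossible.

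\textbf{Step 1: the relations along the progression.} By Lemma~\ref{lem:swap-quotients}, $G$ is even. We substitute three pairs into \eqref{eq:log-law}:
\begin{itemize}
\item[(a)] Taking $u=t$ gives $G(2t)+G(0)=P(y,y)$, hence $G(2t)=q(y)$.
\item[(b)] Taking $u=2t$ gives $G(3t)+G(-t)=P(G(2t),G(t))$. By evenness, $G(3t)=P(q(y),y)-y$.
\item[(c)] Taking $2t$ in place of $t$ in (a) gives $G(4t)=q(q(y))$. Taking $(3t,t)$ gives $G(4t)+G(2t)=P(G(3t),y)$.
\end{itemize}
Combining (a), (b) and (c), we obtain
\[
q(q(y))+q(y)=P\bigl(P(q(y),y)-y,\,y\bigr)\qquad\text{for all } y\in\operatorname{Range}(F).
\]
Since $\operatorname{Range}(F)$ contains an interval, this is an identity of polynomials in $y$.

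\textbf{Step 2: degree comparison.} Since $\deg_y q=d$, the left-hand side has degree exactly $d^2$ for $d\ge 2$. The leading coefficient of $q\circ q$ is $a^{d+1}\neq 0$, where $a$ is the leading coefficient of $q$, and $q(y)$ itself has lower degree. By the non-cancellation hypothesis, the right-hand side has degree $d^3-2d^2+2d$. The difference of the two degrees is
\[
(d^3-2d^2+2d)-d^2=d^3-3d^2+2d=d(d-1)(d-2),
\]
which is strictly positive for $d\ge 3$. So the two sides cannot coincide as polynomials, and this contradiction proves the theorem. For the final assertion about Example~\ref{exmldeg3}, it then suffices to check that the specific cubic combiner in that example satisfies $\deg q=3$ and that its composite has degree $27-18+6=15$.

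\textbf{Where the difficulty lies.} The main obstacle is interpretive rather than computational. One has to make sure that the degree appearing in the hypothesis really is that of the right-hand side of the Step~1 identity. This explains the value $d^3-2d^2+2d$. By Corollary~\ref{lem:boundary-conditions}, $P(u,0)=2u$, so $P$ contains no pure $u^d$ monomial, and the dominant monomial is of type $u^{d-1}v$. Heuristically this gives $\deg P(q(y),y)=(d-1)d+1$ and then $(d-1)(d^2-d+1)+1=d^3-2d^2+2d$. Rather than proving this count, which could fail through cancellation, the theorem assumes it. The proof therefore only needs the clean comparison with $d^2$, together with the fact, guaranteed by continuity and nonconstancy, that the identity holds on an interval of $y$-values.
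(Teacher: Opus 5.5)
Your proposal is correct and follows essentially the same route as the paper: you derive $G(2s)=q(y)$, $G(3s)=P(q(y),y)-y$ and $G(4s)=q(q(y))$, turn $G(4s)+G(2s)=P(G(3s),G(s))$ into a polynomial identity in $y$ because $y$ ranges over an interval, and compare the degrees $d^2$ and $d^3-2d^2+2d$, which differ by $d(d-1)(d-2)>0$. The only cosmetic difference is that the paper obtains $G(3s)$ from the pair $(2s,s)$, which avoids the evenness of $G$ and the symmetry of $P$ that your pair $(t,2t)$ requires.
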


\begin{proof}
Set $G(t):=F(e^t)$, so that $G(0)=0$ and the functional equation becomes
\begin{equation}\label{eq:FE-G}
G(t+u)+G(t-u)=P(G(t),G(u)), \qquad t,u\in\R.
\end{equation}
Since $P(0,v)=2v$ and $P$ is symmetric, the~factored form
$P(u,v)=2u+2v+uv\,R(u,v)$ holds for some symmetric polynomial $R$ of degree $d-2\ge 1$.

{We observe that, for~each integer $n\ge 1$, the~quantity $G(ns)$ can be expressed as a polynomial function of $y=G(s)$. Setting $y=G(s)$ and $q(y):=P(y,y)$, the~composition law \eqref{eq:FE-G} gives
\[
G(2s)=P(G(s),G(s))=q(G(s))=q(y).
\]Setting $t=2s,\,u=s$ in \eqref{eq:FE-G}, we obtain
\[
G(3s) = P(G(2s), G(s)) - G(s) = P(q(y), y) - y.
\]
Setting $t=u=2s$ in \eqref{eq:FE-G}, we obtain
\[
G(4s) = q(G(2s)) = q(q(y)).
\]
Setting $t=3s,\,u=s$ in \eqref{eq:FE-G}, we obtain the identity
\begin{equation}\label{eq:key-identity}
G(4s) + G(2s) = P(G(3s), G(s)).
\end{equation}
Proceeding inductively, each $G(ns)$ is obtained from $y$ by finitely many polynomial substitutions involving $P$. Therefore, for~each fixed $n$, $G(ns)$ is a {polynomial in} 
 $y=G(s)$.} 
\end{proof}
 
We will now analyze this identity as a polynomial relation in $y=G(s)$.

\begin{Lemma}\label{lem:degree-mismatch}
Let $P\in\R[u,v]$ be a symmetric polynomial of degree $d\ge 3$ with $P(0,v)=2v$,
and let $q(y):=P(y,y)$.
Let $G(3s)$ and $G(4s)$ be the polynomials in $y=G(s)$ obtained in the proof
of Theorem~\ref{thm:degree-exclusion}.~Then
\begin{itemize}
\item[(i)] $\deg_y G(4s) = (\deg_y q)^2$.
\item[(ii)] $\deg_y\!\bigl(P(G(3s),G(s))\bigr)\le d^3-2d^2+2d$,
with equality under the non-cancellation assumption of
Theorem~\ref{thm:degree-exclusion}.
\item[(iii)] If $\deg_y q=d$ and equality holds in {\rm(ii)}, then the degree difference is
$d(d-1)(d-2)\ge6$ for all $d\ge3$.
\end{itemize}
\end{Lemma}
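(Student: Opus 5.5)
The plan is to compute every degree directly from the factored form $P(u,v)=2u+2v+uv\,R(u,v)$ of Corollary~\ref{cor:factor-F1-zero}, with $R$ symmetric of degree $d-2$. The key structural fact is that, apart from the linear terms $2u$ and $2v$, every monomial $u^a v^b$ occurring in $P$ has $a\ge1$, $b\ge1$ and $a+b\le d$. In particular, $P$ contains no pure power $u^a$ or $v^b$ of degree $\ge 2$, and this is what produces the specific bound $d^3-2d^2+2d$ rather than the naive $d^3$. Throughout, write $e:=\deg_y q$; since $q(y)=P(y,y)$ we have $e\le d$.

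For (i), I will use $G(4s)=q(q(y))$ from the proof of Theorem~\ref{thm:degree-exclusion}. The degree of a composition of one-variable polynomials is the product of the degrees, because the leading coefficient of $q\circ q$ is $\ell^{e+1}\neq0$, where $\ell$ is the leading coefficient of $q$. Hence $\deg_y G(4s)=e^2$.

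For (ii), I will proceed in two steps, using the general estimate that for polynomials $w(y)$ of degree $m\ge1$ one has
\[
\deg_y P(w(y),y)\le\max\bigl(m,\;\max_{a,b\ge1,\,a+b\le d}(am+b)\bigr).
\]
\begin{itemize}
\item[(a)] First take $w=q$. Since $m=e\ge1$, the maximum of $ae+b$ over $a,b\ge1$, $a+b\le d$ is attained at $a=d-1$, $b=1$. Together with $e\le d$, this gives
\[
\deg_y G(3s)=\deg_y\bigl(P(q(y),y)-y\bigr)\le (d-1)e+1\le d^2-d+1=:m.
\]
The subtracted term $-y$ cannot raise the degree.
\item[(b)] Next apply the same estimate with $w=G(3s)$. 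This gives
\[
\deg_y P(G(3s),y)\le (d-1)m+1=(d-1)(d^2-d+1)+1=d^3-2d^2+2d.
\]
\end{itemize}
Under the non-cancellation hypothesis of Theorem~\ref{thm:degree-exclusion}, equality holds by assumption. That hypothesis is stated precisely as this degree, so nothing more is needed.

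For (iii), assume $e=d$. The left side of \eqref{eq:key-identity} is $G(4s)+G(2s)=q(q(y))+q(y)$, which has degree $d^2$ because $d^2>d$. The right side has degree $d^3-2d^2+2d$. Their difference is
\[
d^3-3d^2+2d=d(d-1)(d-2),
\]
a product of three consecutive integers. It is increasing for $d\ge3$ and equals $6$ at $d=3$, so it is at least $6$.

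The main obstacle is step (b) of (ii). There one must justify that the maximal exponent $(d-1)m+1$ comes only from monomials $u^{d-1}v$. This is exactly where the factorization from Corollary~\ref{cor:factor-F1-zero} enters: it excludes a pure $u^d$ term, which would otherwise give the larger degree $dm$. Without the factorization, the bound in (ii) would be wrong, so I will state this exclusion explicitly.
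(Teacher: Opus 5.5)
Your proposal is correct and follows essentially the paper's own route. You use the factorization $P=2u+2v+uv\,R(u,v)$, bound $\deg_y G(3s)\le d^2-d+1$ via the monomials $u^a v^b$ with $a,b\ge 1$, $a+b\le d$, and then maximize $i\,\deg_y G(3s)+j$ at $i=d-1$, $j=1$; equality in (ii) is exactly the non-cancellation hypothesis. Your version is slightly more careful than the paper's in three places: you keep $e=\deg_y q$ general in (i), you account for the linear terms $2u+2v$, and you justify the bound $\ge 6$ by monotonicity.
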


\begin{proof}
\begin{itemize}
\item[(i)] Since
\(
G(4s)=q(q(y)),
\)
and ${\deg_y q=d}$, we obtain
\(
\deg_y G(4s)=d^2.
\)
\item[(ii)] From the identities above, we have
\end{itemize}
\[
G(3s)=P(q(y),y)-y.
\]
Write
\[
P(u,v)=2u+2v+uv\,R(u,v),
\qquad \deg R(u,v)=d-2.
\]
The highest-degree contribution in $P(q(y),y)$ comes from
\(
q(y)\,y\,R(q(y),y).
\)
Since the degree of $R$ in the variable $u$ is at most $d-2$, and~$\deg_y q=d$, we have
\[
\deg_y R(q(y),y)\le (d-2)d=d^2-2d.
\]
Therefore
\[
\deg_y\bigl(q(y)\,y\,R(q(y),y)\bigr)\le d+1+(d^2-2d)=d^2-d+1.
\]
Hence
\[
\deg_y G(3s)\le d^2-d+1.
\]
Equality holds if the leading coefficient of 
$G(3s)$ (as a polynomial in $y$) is nonzero. This can be verified explicitly in the degree-three~case.

Now consider the right-hand side of \eqref{eq:key-identity}. Its terms are of the form
\[
{a_{ij}\,[G(3s)]^i\,y^j},
\qquad i+j\le d,\quad i,j\ge 1.
\]
The degree of such a term is
\[
i\,\deg_y G(3s)+j.
\]
Using the bound for $\deg_y G(3s)$, we obtain
\begin{align*}
\deg_y\bigl(P(G(3s),G(s))\bigr)
&\le
\max_{1\le i\le d-1}\bigl(i(d^2-d+1)+(d-i)\bigr)\\&=
\max_{1\le i\le d-1}\bigl(i(d^2-d)+d\bigr),
\end{align*}
and the maximum is achieved at $i=d-1$. Hence
\[
\deg_y\bigl(P(G(3s),G(s))\bigr)\le d^3-2d^2+2d.
\]
Under the non-cancellation assumption of Theorem~\ref{thm:degree-exclusion},
this is the   degree of the right-hand~side.

\begin{itemize}
\item[(iii)] By parts  {(i)} and  {(ii)}, in~the equality case, the degree difference is
\end{itemize}
\begin{align*}
(d^3-2d^2+2d)-d^2
&=
d^3-3d^2+2d
\\&=
d(d-1)(d-2).    
\end{align*}

Since $d\ge 3$, all three factors are positive, and~therefore
\[
d(d-1)(d-2)\ge 6.
\]
\end{proof}
\begin{proof}[{Final step of the proof of} 
 Theorem~\ref{thm:degree-exclusion}] 
Since $G$ is continuous and nonconstant, its range $\operatorname{Range}(G)$ contains a non-degenerate interval $I\subset \mathbb{R}$. From~\eqref{eq:key-identity}, we have
\[
G(4s)+G(2s)=P(G(3s),G(s)).
\]
By substituting $y=G(s)$, and~using the previous part of the proof, the~quantities $G(2s)$, $G(3s)$, and $G(4s)$ are polynomials in $y$. Therefore, the left-hand side and the right-hand side of \eqref{eq:key-identity} can be written as polynomial functions of $y$. More explicitly,
\[
A(y):=q(q(y))+q(y),
\qquad
B(y):=P\bigl(P(q(y),y)-y,y\bigr)
\]
belong to $\mathbb{R}[y]$, are independent of $s$, and~\eqref{eq:key-identity} becomes
\[
A(y)=B(y)
\qquad \text{for all } y\in \operatorname{Range}(G).
\]
Since $\operatorname{Range}(G)$ contains the interval $I$, it follows that
\[
A(y)-B(y)=0
\qquad \text{for all } y\in I.
\]
The polynomial  $A(y)-B(y)$ is a polynomial in one variable. A~polynomial that vanishes on a non-degenerate interval must vanish identically. Therefore
\[
A(y)\equiv B(y)
\qquad \text{on } \mathbb{R}.
\]
However, by~Lemma~\ref{lem:degree-mismatch}, the~two sides have different degrees, which is impossible. 
Hence, no continuous nonconstant function $F:\Rp\to\R$ with $F(1)=0$ satisfies the polynomial composition law for a symmetric polynomial $P$ of degree $d\ge 3$.
\end{proof}

{In Theorem~\ref{thm:degree-exclusion}, we assume that no cancellation occurs in the leading term. We now make this assumption~precise.

\begin{Remark}
Let $P\in\R[u,v]$ be symmetric of degree $d\ge 3$ and let $q(y)=P(y,y)$.
We assume that
\[
\deg_y q = d
\]
and
\[
\deg_y\bigl(P(P(q(y),y)-y,y)\bigr)=d^3-2d^2+2d.
\]
This ensures that the leading term is preserved under composition. 
Equivalently, there is no cancellation of the highest-degree contribution on the diagonal $u=v$. 
It guarantees that the right-hand side attains the maximal degree required for the degree mismatch argument.
\end{Remark}
}

\begin{Remark}
The diagonal polynomial $q(x)=P(x,x)$ may have degree $k<d$ if the
highest-degree terms of $P$ vanish on $u=v$.
For example,
\[
P(u,v)=2u+2v+uv(u-v)^2
\]
has degree $4$, while $q(x)=4x$.

For the combiner considered in
Example~\ref{exmldeg3}, we have
$q(x)=4x+2x^3$; hence, $\deg_y q=d=3$.
Thus, the possible degeneration $\deg_y q<d$ does not occur in the explicit case considered here.
\end{Remark}
{ The following corollary follows directly from Theorem~\ref{thm:degree-exclusion} under the assumptions clarified above.}
\begin{Corollary}\label{cor:degree-two-forced}
Let $F:\Rp\to\R$ be continuous and nonconstant, and~assume that
\[
F(xy)+F\Big(\frac xy\Big )=P(F(x),F(y)),
\]
where $P\in\R[u,v]$ is symmetric and $F(1)=0$.
Assume the non-cancellation assumption of
Theorem~\ref{thm:degree-exclusion}. Then $\deg P(u,v)\le 2.$
Consequently, by~Theorem~\ref{thm:bilinear-forced},
\[
P(u,v)=2u+2v+c\,uv
\]
is the unique polynomial composition law admitting nonconstant
continuous solutions.
\end{Corollary}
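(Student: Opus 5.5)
The corollary will follow from a contrapositive use of Theorem~\ref{thm:degree-exclusion}, after which the quadratic case is handed to Theorem~\ref{thm:bilinear-forced}. Suppose $F$ is continuous, nonconstant, satisfies $F(1)=0$, and solves the composition law with a symmetric polynomial combiner $P$.

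First I will check that $P$ meets the hypotheses of Theorem~\ref{thm:degree-exclusion}, apart from the degree. Lemma~\ref{lem:swap-quotients} gives reciprocity of $F$. Lemma~\ref{lem:axis-at-F1}, together with Corollary~\ref{lem:boundary-conditions}, then gives $P(0,v)=2v$. By hypothesis the non-cancellation assumption holds, namely
\[
\deg_y q=d \quad\text{and}\quad \deg_y\bigl(P(P(q(y),y)-y,y)\bigr)=d^3-2d^2+2d.
\]

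Now suppose, for contradiction, that $d=\deg P\ge 3$. Every hypothesis of Theorem~\ref{thm:degree-exclusion} is then satisfied. That theorem, via the degree mismatch in Lemma~\ref{lem:degree-mismatch}(iii) between $A(y)=q(q(y))+q(y)$ of degree $d^2$ and $B(y)$ of degree $d^3-2d^2+2d$, shows that no continuous nonconstant solution exists. This contradicts the existence of $F$, so $\deg P\le 2$.

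With $\deg P\le 2$, Assumption~\ref{ass:degree-two} holds and Theorem~\ref{thm:bilinear-forced} applies. Concretely, Corollary~\ref{cor:factor-F1-zero} writes $P(u,v)=2u+2v+uv\,R(u,v)$. Since $uv$ already has degree two, $R$ must be a constant $c$, so $P(u,v)=2u+2v+c\,uv$.

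Uniqueness means there is no other law within this class. Any symmetric combiner admitting a nonconstant continuous solution with $F(1)=0$, and satisfying non-cancellation, has just been shown to have this form. The constant $c$ is the only free parameter, and Section~\ref{sec:reduction} supplies solutions for every $c$.

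The main obstacle is a point of interpretation rather than computation. The non-cancellation hypothesis refers to the degree $d$, and it makes sense only when $d\ge3$; for $d\le2$ it is vacuous. I will read the corollary's hypothesis as the assertion \emph{if $d\ge3$, then non-cancellation holds}. The contradiction argument uses it only in that case, so the reading is consistent.

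I will also note that the conclusion genuinely depends on this assumption. Degenerate combiners such as $2u+2v+uv(u-v)^2$ have a diagonal $q$ of lower degree, so they are not excluded by the argument, only by the hypothesis.
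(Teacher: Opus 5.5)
Your proposal is correct and follows the paper's proof: argue by contradiction via Theorem~\ref{thm:degree-exclusion} to get $\deg P\le 2$, then invoke Theorem~\ref{thm:bilinear-forced}. Deriving $P(0,v)=2v$ from Corollary~\ref{lem:boundary-conditions} usefully makes explicit a hypothesis of Theorem~\ref{thm:degree-exclusion} that the paper's short proof leaves implicit, and obtaining constant $R$ from Corollary~\ref{cor:factor-F1-zero} is an equally valid shortcut for the quadratic step.
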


{\begin{proof}
If $\deg P\ge 3$, Theorem~\ref{thm:degree-exclusion} shows no continuous nonconstant $F$ with $F(1)=0$ can exist. Hence, $\deg P\le 2$. Then Theorem~\ref{thm:bilinear-forced} implies $P(u,v)=2u+2v+c\,uv$.
\end{proof}}

\label{sec:d3-verification}






%
%
%

\begin{Example}\label{exmldeg3}
Consider the polynomial
\[
P(u,v)=2u+2v+u^2v+uv^2 ,
\]
which has degree 3 and satisfies $P(0,v)=2v$.
Then
\[
q(x)=P(x,x)=4x+2x^3 .
\]
Let $y=G(s)$.   Using the identities derived in the proof of Theorem~\ref{thm:degree-exclusion}, we obtain
\begin{align*}
G(2s)&=4y+2y^3, \\
G(3s)&=9y+24y^3+18y^5+4y^7, \\
G(4s)&=16y+136y^3+192y^5+96y^7+16y^9 .
    \end{align*}
The identity
\[
G(4s)+G(2s)=P(G(3s),G(s))
\]
requires the equality of two polynomials in $y$.
We have
\[
\deg_y\bigl(G(4s)+G(2s)\bigr)=9,
\qquad
\deg_y\bigl(P(G(3s),G(s))\bigr)=15 .
\]
Thus, the degrees do not match, so the identity cannot hold identically.
\end{Example}

Under Assumption~\ref{ass:degree-two}, $P$ can be written in the general quadratic form
\begin{equation}\label{eq:quadratic-form}
P(u,v)=a+bu+cv+d\,uv+e\,u^2+f\,v^2,
\qquad a,b,c,d,e,f\in\R.
\end{equation}

\begin{Lemma}\label{lem:symmetry-reduction}
If $P$ is symmetric, i.e., $P(u,v)=P(v,u)$, then $b=c$ and $e=f$.
Consequently,
\begin{equation}\label{eq:quadratic-symmetric}
P(u,v)=a+b(u+v)+d\,uv+e(u^2+v^2).
\end{equation}
\end{Lemma}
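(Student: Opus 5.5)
The plan is to compare coefficients directly. Symmetry $P(u,v)=P(v,u)$ is an identity in $\R[u,v]$, so two sides that agree as polynomials must have identical coefficients on every monomial $u^iv^j$. I will write out $P(v,u)$ from the general quadratic form \eqref{eq:quadratic-form} by swapping the variables:
\[
P(v,u)=a+bv+cu+d\,vu+e\,v^2+f\,u^2 .
\]
Subtracting this from $P(u,v)$ gives
\[
P(u,v)-P(v,u)=(b-c)\,u+(c-b)\,v+(e-f)\,u^2+(f-e)\,v^2 .
\]
The constant term $a$ and the mixed term $d\,uv$ cancel identically, since they are already invariant under the swap.

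Next I use the fact that this difference is the zero polynomial. The monomials $1,u,v,uv,u^2,v^2$ are linearly independent in $\R[u,v]$, so every coefficient above must vanish. The coefficient of $u$ gives $b=c$, and the coefficient of $u^2$ gives $e=f$. Substituting these back into \eqref{eq:quadratic-form} and grouping terms yields \eqref{eq:quadratic-symmetric}.

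One point needs care. If symmetry were only known on $\operatorname{Range}(F)\times\operatorname{Range}(F)$ rather than on all of $\R^2$, I would first upgrade it to a global identity. This uses the observation from the Preliminaries: a polynomial vanishing on a product of nondegenerate intervals vanishes identically, which is also the argument of Lemma~\ref{lem2}. Here, however, $P$ is assumed symmetric as a polynomial, so that upgrade is not needed. I do not expect any real obstacle; the only thing to avoid is mixing up the coefficient name $c$ in \eqref{eq:quadratic-form} with the parameter $c$ in the final bilinear law.
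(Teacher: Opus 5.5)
Your proposal is correct and takes essentially the same approach as the paper: swap $u$ and $v$ in the general quadratic form and compare the coefficients of $u,v$ and of $u^2,v^2$ to get $b=c$ and $e=f$. Writing out the difference polynomial explicitly and noting that the range-to-global upgrade is unnecessary here are harmless additions the paper leaves implicit.
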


\begin{proof}
The symmetry implies equality of coefficients after interchanging $u$ and $v$ in \eqref{eq:quadratic-form}.
Comparing the coefficients of $u$ and $v$ gives $b=c$, and~comparing those of $u^2$ and $v^2$ gives $e=f$.
\end{proof}

\medskip

We now determine the relations among the coefficients imposed by the functional equation.

\begin{Theorem}\label{thm:quadratic-relation}
Let $F:\Rp\to\R$ be continuous and nonconstant satisfying
\[
F(xy)+F\Big(\frac xy\Big )=P(F(x),F(y))
\qquad \text{for all }x,y>0,
\]
where $P$ is a symmetric quadratic polynomial of the form
\[
P(u,v)=a+b(u+v)+c\,uv+e(u^2+v^2).
\]
Then $e=0$, and~
\(
b(2-b)+ac=0.
\)
\end{Theorem}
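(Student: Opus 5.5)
The plan is to extract everything from the boundary identity at $F(1)$, namely Lemma~\ref{lem:axis-at-F1}, which applies here because $F$ is continuous and nonconstant and $P$ is symmetric. Note that we do \emph{not} assume $F(1)=0$ in this statement. So I will work with $f_0:=F(1)$ as an unknown real constant. Lemma~\ref{lem:axis-at-F1} gives the polynomial identity
\[
P(f_0,v)=2v \qquad \text{for all } v\in\R .
\]
The lemma already upgrades the identity from $\operatorname{Range}(F)$ to all of $\R$, using that the range contains an interval.

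Substituting $u=f_0$ into the symmetric quadratic form of $P$ gives
\[
P(f_0,v)=\bigl(a+bf_0+ef_0^2\bigr)+\bigl(b+cf_0\bigr)v+e\,v^2 .
\]
Comparing coefficients with $2v$ yields three relations:
\[
e=0,\qquad b+cf_0=2,\qquad a+bf_0+ef_0^2=0 .
\]
The first is the claim $e=0$. With $e=0$, the third becomes $a+bf_0=0$.

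To eliminate the unknown $f_0$, I multiply $a+bf_0=0$ by $c$ to get $ac+b\,(cf_0)=0$. The middle relation gives $cf_0=2-b$, and substituting this yields $ac+b(2-b)=0$, which is the asserted relation. This elimination is the only step needing any care. One should multiply by $c$ rather than solve for $f_0$, so as to avoid dividing by $c$ or $b$; this way the case $c=0$ (where then $b=2$ and $a=-2f_0$) is covered automatically, with no case split.

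I do not expect a genuine obstacle. The only points to state explicitly are that Lemma~\ref{lem:axis-at-F1} is used in its polynomial form, valid for all $v$ and not just on the range, and that coefficient comparison in $\R[v]$ is legitimate. Symmetry of $P$ is needed only to invoke that lemma, which relies on reciprocity from Lemma~\ref{lem:swap-quotients}. As a sanity check, when $f_0=0$ the relations give $a=0$, $b=2$, $e=0$, consistent with Corollary~\ref{lem:boundary-conditions} and the bilinear form $2u+2v+c\,uv$.
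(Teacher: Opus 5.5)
Your proposal is correct and follows essentially the same route as the paper: substitute $u=F(1)$ into the boundary identity $P(F(1),v)=2v$ from Lemma~\ref{lem:axis-at-F1}, compare coefficients in $v$, and eliminate $F(1)$. Your explicit elimination (multiplying $a+bF(1)=0$ by $c$ and substituting $cF(1)=2-b$) simply spells out the step the paper states in one line, and it covers $c=0$ without a case split.
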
\begin{proof}
By Lemma~\ref{lem:axis-at-F1}, we have
\(
P(F(1),v)=2v\) for all \( v\in\R.
\)
Substituting $u=F(1)$ in the given form of $P$, we obtain
\[
a+b(F(1)+v)+cF(1)v+e(F(1)^2+v^2)=2v.
\]
Since this identity holds for all $v\in\R$, we have
\[
e=0,
\qquad
a+bF(1)=0,
\qquad
b+cF(1)=2.
\]
Eliminating $F(1)$  from the last two equations gives
\[
b(2-b)+ac=0.
\]
\end{proof}

We now consider the effect of the normalization at $x=1$.

\begin{Corollary}\label{cor:normalization}
According to the assumptions of Theorem~\ref{thm:quadratic-relation}, 
if $F(1)=0$, then
\[
a=0, \qquad b=2,
\]
and hence
\[
P(u,v)=2u+2v+c\,uv.
\]
\end{Corollary}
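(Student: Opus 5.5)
The plan is to specialize the relations already extracted in the proof of Theorem~\ref{thm:quadratic-relation} to the normalization $F(1)=0$. That proof compared coefficients in the identity $P(F(1),v)=2v$ from Lemma~\ref{lem:axis-at-F1}, which holds for all $v\in\R$ because the range of $F$ contains an interval. Writing $F(1)$ for the value at $1$, the comparison gave
\[
e=0,\qquad a+b\,F(1)=0,\qquad b+c\,F(1)=2 .
\]

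Setting $F(1)=0$, the second relation reads $a=0$ and the third reads $b=2$, while $e=0$ is unchanged. Substituting these values into the symmetric quadratic form of Lemma~\ref{lem:symmetry-reduction}, namely $P(u,v)=a+b(u+v)+c\,uv+e(u^2+v^2)$, leaves exactly
\[
P(u,v)=2u+2v+c\,uv .
\]
The coefficient $c$ is not constrained by this step. This is consistent with the relation $b(2-b)+ac=0$, which is satisfied trivially when $a=0$ and $b=2$.

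As an independent check, one can instead use Corollary~\ref{lem:boundary-conditions}, which gives $P(0,v)=2v$. Evaluating the quadratic form at $u=0$ yields $a+bv+ev^2=2v$ identically in $v$, so again $a=0$, $b=2$ and $e=0$. Equivalently, under Assumption~\ref{ass:degree-two}, Corollary~\ref{cor:factor-F1-zero} forces $R$ to be a constant $c$, which gives the same form.

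There is no real obstacle here. The only point needing care is that the coefficient comparison requires $P(0,v)=2v$ as a polynomial identity rather than merely on $\operatorname{Range}(F)$. This is exactly where continuity and nonconstancy of $F$ enter: they guarantee that the range contains a nondegenerate interval, and a polynomial vanishing on such an interval vanishes identically. Lemma~\ref{lem:axis-at-F1} has already established this, so the corollary follows directly.
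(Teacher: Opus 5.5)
Your proposal is correct and matches the paper's proof. Both set $F(1)=0$ in the relations $a+bF(1)=0$ and $b+cF(1)=2$ (together with $e=0$) from Theorem~\ref{thm:quadratic-relation}, which gives $a=0$ and $b=2$. Your alternative check via $P(0,v)=2v$ is also valid but not needed.
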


\begin{proof}
By Theorem~\ref{thm:quadratic-relation}, from~the relations $b+cF(1)=2$ and $a+bF(1)=0$, substituting $F(1)=0$ gives $b=2$ and $a=0$.
\end{proof}

\medskip

Thus, in~the case of $F(1)=0$, the~composition law reduces to
\begin{equation}\label{eq:F-bilinear-law}
F(xy)+F\Big(\frac xy\Big)
=
2F(x)+2F(y)+c\,F(x)F(y),
\qquad x,y>0.
\end{equation}
This equation will be analyzed in Section~\ref{sec:reduction},
where we make explicit its connection with the classical d'Alembert functional~equation.

\begin{Theorem}[d'Alembert Inevitability Theorem]\label{thm:bilinear-forced}
Let $F:\Rp\to\R$ be continuous and nonconstant satisfying a polynomial composition law with a symmetric combiner of degree at most two.
Then $P$ must be of the form
\[
P(u,v)=a+b(u+v)+c\,uv
\]
with $b(2-b)+ac=0$, {where $a,b,c\in\R$.}
If, moreover, $F(1)=0$, then
\[
P(u,v)=2u+2v+c\,uv,
\]
and $F$ satisfies Equation \eqref{eq:F-bilinear-law}.
\end{Theorem}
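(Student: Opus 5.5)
The plan is to assemble the theorem from results already established in Sections~\ref{sec:preliminaries} and~\ref{sec:polynomial-classification}; no new functional-equation analysis should be needed.

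First, under Assumption~\ref{ass:degree-two}, I write the combiner in the general quadratic form \eqref{eq:quadratic-form}. Symmetry of $P$, via Lemma~\ref{lem:symmetry-reduction}, reduces it to \eqref{eq:quadratic-symmetric}, namely $P(u,v)=a+b(u+v)+c\,uv+e(u^2+v^2)$, after renaming the $uv$ coefficient as $c$.

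The key input is Lemma~\ref{lem:axis-at-F1}, which uses continuity, nonconstancy and symmetry through the reciprocity of Lemma~\ref{lem:swap-quotients}. It gives the polynomial identity $P(F(1),v)=2v$ for all $v\in\R$, not merely on $\operatorname{Range}(F)$, because a polynomial vanishing on an interval vanishes identically. Comparing coefficients of $v^2$, $v$ and $1$ gives
\[
e=0,\qquad b+cF(1)=2,\qquad a+bF(1)+eF(1)^2=0 .
\]
This is exactly Theorem~\ref{thm:quadratic-relation}, so $P=a+b(u+v)+c\,uv$.

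To eliminate $F(1)$, I multiply $a+bF(1)=0$ by $c$ and substitute $cF(1)=2-b$, which yields $ac+b(2-b)=0$. This multiplication is valid even when $c=0$: then $b=2$, and $a+2F(1)=0$ leaves $F(1)$ free, yet the relation $b(2-b)+ac=0$ still holds trivially. So no case split is required, which is the only point needing care.

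For the normalized statement, I set $F(1)=0$ in the two linear relations to get $b=2$ and $a=0$, as in Corollary~\ref{cor:normalization}. Substituting into the composition law then gives \eqref{eq:F-bilinear-law}.

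I expect no real obstacle. The only subtle step, already handled by Lemma~\ref{lem:axis-at-F1}, is upgrading the identity from the range of $F$ to all of $\R$, which is why continuity and nonconstancy are needed.
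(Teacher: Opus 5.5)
Your proposal is correct and follows essentially the same route as the paper. Lemma~\ref{lem:symmetry-reduction} reduces $P$ to $a+b(u+v)+c\,uv+e(u^2+v^2)$, the identity $P(F(1),v)=2v$ from Lemma~\ref{lem:axis-at-F1} yields $e=0$, $b+cF(1)=2$, $a+bF(1)=0$ (Theorem~\ref{thm:quadratic-relation}), and eliminating $F(1)$, or setting $F(1)=0$ as in Corollary~\ref{cor:normalization}, gives both conclusions; your check that $c\,(a+bF(1))=ac+b(2-b)$ remains valid when $c=0$ usefully spells out the paper's terse ``eliminating $F(1)$'' step.
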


{
\begin{proof}
By Lemma~\ref{lem:symmetry-reduction}, the~polynomial $P$ has the form
\[
P(u,v)=a+b(u+v)+d\,uv+e(u^2+v^2).
\]
By Theorem~\ref{thm:quadratic-relation}, we have $e=0$; hence
\[
P(u,v)=a+b(u+v)+d\,uv.
\]
Renaming $d=c$, we obtain $P(u,v)=a+b(u+v)+c\,uv$. The~relation
\(
b(2-b)+ac=0
\)
follows directly from Theorem~\ref{thm:quadratic-relation}.

If $F(1)=0$, Corollary~\ref{cor:normalization} gives $a=0$ and $b=2$; hence
\[
P(u,v)=2u+2v+c\,uv.
\]
\end{proof}
}

\begin{Corollary}\label{cor:combiner-uniqueness}
Let $F:\Rp\to\R$ be continuous and nonconstant.
If two polynomials $P,P'\in\R[u,v]$ satisfy
\[
F(xy)+F\Big(\frac xy\Big)=P(F(x),F(y))=P'(F(x),F(y))
\qquad x,y>0,
\]
then $P=P'$.
\end{Corollary}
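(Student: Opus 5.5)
The plan is to reduce the statement to the fact, already recorded in Section~\ref{sec:preliminaries}, that a polynomial vanishing on $\operatorname{Range}(F)\times\operatorname{Range}(F)$ must be the zero polynomial. Set $Q:=P-P'\in\R[u,v]$. Subtracting the two representations of $F(xy)+F(x/y)$ gives
\[
Q\bigl(F(x),F(y)\bigr)=0\qquad\text{for all }x,y>0,
\]
so $Q$ vanishes on $\operatorname{Range}(F)\times\operatorname{Range}(F)$. No symmetry of $P$ or $P'$ and no normalization $F(1)=0$ is needed.

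Next, since $F$ is continuous and nonconstant on the connected set $\Rp$, its range is an interval with more than one point. Hence it contains a nondegenerate interval $I=[\alpha,\beta]$ with $\alpha<\beta$, and $Q$ vanishes on the rectangle $I\times I$.

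To conclude $Q\equiv0$, I would give the standard two-step argument rather than just cite it. Write
\[
Q(u,v)=\sum_{k=0}^{m} c_k(u)\,v^k .
\]
For each fixed $u_0\in I$, the one-variable polynomial $v\mapsto Q(u_0,v)$ vanishes on the infinite set $I$, so all its coefficients $c_k(u_0)$ are zero. Each $c_k$ is then a one-variable polynomial vanishing on the infinite set $I$, hence $c_k\equiv0$. Therefore $Q\equiv0$, that is, $P=P'$.

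The only point requiring care is the passage from ``continuous and nonconstant'' to ``range contains an interval'', which rests on the intermediate value theorem. Remark~\ref{rem:regularity} notes the weaker alternative: it suffices that the range is infinite, or dense in some interval, since the same coefficient argument works on any infinite set.
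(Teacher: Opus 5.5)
Your proof is correct and follows the paper's argument. $P-P'$ vanishes on $\operatorname{Range}(F)\times\operatorname{Range}(F)$, which contains a rectangle $I\times I$ because a continuous nonconstant function on $\Rp$ has a nondegenerate interval as its range, so $P-P'\equiv 0$. Your coefficient-by-coefficient argument makes explicit the final step that the paper leaves implicit; it also works for any infinite range, although Remark~\ref{rem:regularity} itself only states the ``dense in some interval'' version.
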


\begin{proof}
The identity implies $P(u,v)=P'(u,v)$ for all
$(u,v)\in\operatorname{Range}(F)\times\operatorname{Range}(F)$.
Since $F$ is continuous and nonconstant, its range contains a nondegenerate interval.
Hence, $P=P'$ on a set containing a rectangle in $\R^2$.
Therefore, the polynomial $P-P'$ is identically zero, and~thus $P\equiv P'$ on $\R^2$.
\end{proof}
\begin{Remark}{\label{prop:degree-one-trivial}
Let $F:\Rp\to\R$ be continuous and nonconstant, with~$F(1)=0$. If~$F$ satisfies \eqref{eq:poly-law} with a symmetric combiner $P\in\R[u,v]$ of total degree at most one, then $P(u,v)=2(u+v)$, and~the composition law coincides with \eqref{eq:F-bilinear-law} at $c=0$. Consequently, the~degree-one case is not a separate family; it is included in Theorem~\ref{thm:bilinear-forced} for $c=0$. Thus, degree two is the minimal degree for which a free parameter appears (namely $c$). }  
\end{Remark}

If, in~addition, $F$ is convex, then $x=1$ is a global minimum of $F$.
In this case, the~normalization $F(1)=0$ corresponds to shifting the minimum to~zero.

\begin{Lemma}\label{lem:symmetry-convex-min}
Let $F:\Rp\to\R$ be continuous and nonconstant, and~suppose
\[
F(xy)+F\Big(\frac xy\Big )=P(F(x),F(y)), \qquad x,y>0,
\]
where $P\in\R[u,v]$ is symmetric.
Assume, in addition, that $F$ is convex.
Then $x=1$ is a global minimum of $F$, i.e.,
\[
F(1)\le F(x)\qquad \text{for all }x>0.
\]
\end{Lemma}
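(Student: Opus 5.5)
Since $P$ is symmetric, Lemma~\ref{lem:swap-quotients} gives the reciprocity $F(x)=F(1/x)$ for all $x>0$. I will pass to logarithmic coordinates via $G(t)=F(e^t)$ as in \eqref{eq:def-G}; then reciprocity says exactly that $G$ is an even function on $\R$. Convexity of $F$ is meant here in the cost-function sense, which I will take to be convexity of $G$ (log-convexity of $F$). If instead $F$ is convex in the ordinary sense on $\Rp$, I will run the same argument directly in the variable $x$, with $x$ and $1/x$ in place of $t$ and $-t$.

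\textbf{Log-convex case.} Fix $t\in\R$. Since $0=\tfrac12 t+\tfrac12(-t)$, convexity of $G$ gives
\[
G(0)\le \tfrac12 G(t)+\tfrac12 G(-t)=G(t),
\]
where the equality uses evenness. Rewriting this in terms of $F$ yields $F(1)\le F(e^t)$ for every $t$, which is the claim.

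\textbf{Convex in $x$.} I will use the identity
\[
1=\frac{1}{x+1}\,x+\frac{x}{x+1}\cdot\frac1x ,
\]
whose weights $\frac{1}{x+1}$ and $\frac{x}{x+1}$ lie in $(0,1)$ and sum to $1$. Convexity of $F$ then gives
\[
F(1)\le \frac{1}{x+1}F(x)+\frac{x}{x+1}F(1/x).
\]
By reciprocity $F(1/x)=F(x)$, so the right-hand side equals $F(x)$, and we obtain $F(1)\le F(x)$.

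The only delicate point is the first step, identifying the correct notion of convexity and making sure reciprocity is available. The key ingredient is that $1$ is a convex combination of $x$ and $1/x$; once that is in place the proof is a single line. The hypotheses that $F$ is continuous and nonconstant are not actually needed for this lemma.
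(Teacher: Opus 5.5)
Your proof is correct and is essentially the paper's argument. The paper combines reciprocity from Lemma~\ref{lem:swap-quotients} with ordinary convexity, arguing by contradiction with an unspecified $\theta\in(0,1)$ satisfying $1=\theta x_0+(1-\theta)/x_0$; your second case does the same thing directly with the explicit weights $\frac{1}{x+1}$ and $\frac{x}{x+1}$. Note that the paper means ordinary convexity on $\Rp$ (see the $F''(x)\ge 0$ computation in Corollary~\ref{cor:convex-case}), so your second case is the relevant one. Also, convexity of $G(t)=F(e^t)$ is not what is usually called log-convexity of $F$, which means $\ln F$ is convex.
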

\begin{proof}
Since $P$ is symmetric and $F$ is continuous and nonconstant,
we have
\[
F(x)=F\Big(\frac 1 x\Big), \qquad x>0.
\]
Suppose there exists $x_0>0$ such that $F(x_0)<F(1)$.
Then, also, $F(1/x_0)=F(x_0)<F(1)$.

Since $1$ lies between $x_0$ and $1/x_0$, there exists $\theta\in(0,1)$ such that
\[
1=\theta x_0+(1-\theta)\frac1{x_0}.
\]
By convexity,
\[
F(1)
\le \theta F(x_0)+(1-\theta)F(1/x_0)
=F(x_0)
<F(1),
\]
a contradiction.
\end{proof}



\begin{Remark} {For every real value of $c$, the~bilinear Equation \eqref{eq:F-bilinear-law} reduces,
 to a d'Alembert \mbox{Equation~\eqref{dal}} (by using Lemma \ref{lem:affine-reduction}). The~parameter $c$ parametrizes the family but does not
create new solution types.}
\end{Remark}

 \subsection{Reduction to Classical D’Alembert}\label{sec:reduction}

In this part, we show that the bilinear family \eqref{eq:F-bilinear-law}
reduces, after~a change of variables, to~the classical d'Alembert~equation.

\begin{Lemma}\label{lem:log-bilinear}
Assume \eqref{eq:F-bilinear-law} and define $G$ by \eqref{eq:def-G}.
Then for all $t,u\in\R$,
\begin{equation}\label{eq:G-bilinear}
  G(t+u)+G(t-u)=2G(t)+2G(u)+c\,G(t)G(u).
\end{equation}
\end{Lemma}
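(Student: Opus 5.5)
The identity follows by direct substitution. I will rewrite \eqref{eq:F-bilinear-law} in the logarithmic variables already introduced in \eqref{eq:def-G}. Given arbitrary $t,u\in\R$, I set $x=e^{t}$ and $y=e^{u}$. Both are positive, so \eqref{eq:F-bilinear-law} applies to this pair. The exponential map $t\mapsto e^t$ is a bijection $\R\to\Rp$, so every pair $(t,u)\in\R^2$ arises this way. The resulting identity therefore holds for all $t,u$, not merely on a subset.

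Next I will evaluate each term using the homomorphism property of the exponential. We have $xy=e^{t}e^{u}=e^{t+u}$ and $x/y=e^{t-u}$. Hence $F(xy)=G(t+u)$ and $F(x/y)=G(t-u)$. On the right-hand side, $F(x)=G(t)$ and $F(y)=G(u)$. Substituting these four values into \eqref{eq:F-bilinear-law} gives exactly \eqref{eq:G-bilinear}; this is the same passage that took \eqref{eq:poly-law} to \eqref{eq:log-law}, now with the specific combiner $P(u,v)=2u+2v+c\,uv$.

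There is no real obstacle. The only point needing a sentence is surjectivity of $\exp$ onto $\Rp$, which guarantees that the statement holds for all real $t,u$. I would also note that the transformed equation inherits $G(0)=F(1)=0$ and the evenness $G(-t)=G(t)$ from Lemma~\ref{lem:swap-quotients}. These properties are not needed here but will be used in the subsequent affine reduction to the d'Alembert equation.
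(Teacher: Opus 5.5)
Your proof is correct and is the same direct substitution $x=e^t$, $y=e^u$ that the paper uses. It rests on $xy=e^{t+u}$, $x/y=e^{t-u}$ and $G(t)=F(e^t)$. One small remark: for the identity to hold for all $t,u\in\R$ you only need $e^t,e^u\in\Rp$, not surjectivity of $\exp$; surjectivity would matter only for the converse direction from \eqref{eq:G-bilinear} back to \eqref{eq:F-bilinear-law}.
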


\begin{proof}
Let $x=e^t$ and $y=e^u$ in \eqref{eq:F-bilinear-law}.
Using $xy=e^{t+u}$ and $x/y=e^{t-u}$ and $G(t)=F(e^t)$
gives \eqref{eq:G-bilinear}.
\end{proof}

\begin{Lemma}\label{lem:affine-reduction}
Assume \eqref{eq:G-bilinear} for some constant $c\in\R$.
\begin{itemize}
\item[(i)] If $c\neq 0$ and
\[
H(t):=1+\frac{c}{2}G(t),
\]
then $H$ satisfies the classical d'Alembert equation
\begin{equation}\label{dal}
H(t+u)+H(t-u)=2H(t)H(u).
\end{equation}
\item[(ii)] If $c=0$, then \eqref{eq:G-bilinear} reduces to
\begin{equation}\label{eq:quadratic-fe}
G(t+u)+G(t-u)=2G(t)+2G(u).
\end{equation}
\end{itemize}
\end{Lemma}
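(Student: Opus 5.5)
The proof is a direct algebraic substitution, so I will verify each part by expanding both sides of the relevant identity.

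For part (i), with $c\neq 0$, I invert the affine change of variables: $G(t)=\tfrac{2}{c}\bigl(H(t)-1\bigr)$. I will substitute this into both sides of \eqref{eq:G-bilinear} and compute them separately.

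The left-hand side becomes
\[
\tfrac{2}{c}\bigl(H(t+u)+H(t-u)-2\bigr).
\]
On the right-hand side, the linear part gives
\[
2G(t)+2G(u)=\tfrac{4}{c}\bigl(H(t)+H(u)-2\bigr).
\]
The product term gives
\[
c\,G(t)G(u)=\tfrac{4}{c}\bigl(H(t)-1\bigr)\bigl(H(u)-1\bigr)=\tfrac{4}{c}\bigl(H(t)H(u)-H(t)-H(u)+1\bigr).
\]
When these are added, the terms $\pm\tfrac4c H(t)$ and $\pm\tfrac4c H(u)$ cancel. The right-hand side therefore reduces to $\tfrac4c\bigl(H(t)H(u)-1\bigr)$.

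Equating the two sides and multiplying by $c/2$, which is allowed since $c\neq0$, yields
\[
H(t+u)+H(t-u)-2=2H(t)H(u)-2.
\]
This is exactly \eqref{dal}. The computation is reversible, so the equivalence in fact holds in both directions, although only the forward direction is needed here.

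For part (ii), setting $c=0$ in \eqref{eq:G-bilinear} deletes the product term and gives \eqref{eq:quadratic-fe} verbatim. Nothing further needs to be shown.

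There is no real obstacle in this lemma. The only point requiring care is the bookkeeping of constants, namely checking that the constant terms $-2$ on each side match and that the linear terms cancel. It is worth noting that this cancellation is precisely why the coefficient $2$ of $u+v$ in $P$, as established in Corollary~\ref{cor:normalization}, is the one compatible with the shift $H=1+\tfrac c2G$. I will also record that $H(0)=1$, since $G(0)=F(1)=0$. This fact is not required for the identity itself, but it will be used in Section~\ref{sec:reduction} when invoking the classification of continuous solutions of \eqref{dal}.
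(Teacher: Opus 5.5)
Your computation is correct and is essentially the paper's proof, a direct substitution of the affine change of variables; the only cosmetic difference is that you rewrite \eqref{eq:G-bilinear} in terms of $H$ via $G=\tfrac{2}{c}(H-1)$, whereas the paper expands both sides of \eqref{dal} in terms of $G$ and applies \eqref{eq:G-bilinear} once. Part (ii) is handled identically.
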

\begin{proof}
\begin{itemize}
\item[(i)] If $c\neq 0$, substituting  $H(t)$ into   \eqref{dal} one obtains
\end{itemize}
\[
  H(t+u)+H(t-u)
  =2+\frac{c}{2}\big(G(t+u)+G(t-u)\big).
\]
From \eqref{eq:G-bilinear}, we have
\[
  2+\frac{c}{2}\big(2G(t)+2G(u)+c\,G(t)G(u)\big)
  =2+cG(t)+cG(u)+\frac{c^2}{2}G(t)G(u).
\]
On the other hand, we have
\begin{align*}
  2H(t)H(u)
  &=2\Big(1+\frac{c}{2}G(t)\Big)\Big(1+\frac{c}{2}G(u)\Big)
  \\&=2+cG(t)+cG(u)+\frac{c^2}{2}G(t)G(u),
\end{align*}
so \eqref{dal} holds.\\
\begin{itemize}
\item[(ii)] If $c=0$, \eqref{eq:G-bilinear} reduces directly to \eqref{eq:quadratic-fe}.
\end{itemize}
\end{proof}

We now determine the solutions in both~cases.

\begin{itemize}
    \item[(i)] Case $c\neq 0$. The~function
\(
H(t)
\)
satisfies \eqref{dal}.
If $F$ is continuous, then $H$ is continuous.
Since $F(x)=F(1/x)$, $H$ is even and $H(0)=1$.

Under standard regularity assumptions,
(see~\cite{Aczel,AczelDhombres, Kannappan, Kannappan2,StetkaerBook}), all even solutions of \eqref{dal}  with $H(0)=1$ are
\[
H(t)=\cosh(\alpha t)
\quad\text{or}\quad
H(t)=\cos(\alpha t),
\]
for some $\alpha\in\R$.
Equivalently,
\[
H(t)=\frac{e^{\lambda t}+e^{-\lambda t}}{2},
\]
with $\lambda\in\C$.
Substituting this into the definition of $F$, we obtain
\[
F(e^t)=G(t)=\frac{2}{c}\big(H(t)-1\big).
\]

\item[(ii)] If $c=0$, then \eqref{eq:G-bilinear} reduces to \begin{equation}
    G(t+u)+G(t-u)=2G(t)+2G(u)\qquad \text{for all }t,u\in\R.
  \end{equation}
with $G$ even and $G(0)=0$. \end{itemize}

\begin{Theorem}[\cite{Kannappan2}]\label{thrm4}
Suppose $G:\R\to\R$ satisfies
\[
G(t+u)+G(t-u)=2G(t)+2G(u).
\]
If $G$ is continuous, or~continuous at a point,
bounded on $[0,\delta)$ for some $\delta>0$,
bounded on a set of positive measure,
or measurable,
then
\[
G(t)=k t^2,
\qquad k\in\R.
\]
\end{Theorem}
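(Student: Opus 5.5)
The plan is the classical Jordan--von Neumann route: show $G$ is additive in the right sense, get $G(rt)=r^2G(t)$ for rationals $r$, and then extend by regularity. Throughout write $G(t+u)+G(t-u)=2G(t)+2G(u)$ for the quadratic equation.

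\textbf{Step 1: basic consequences.} Setting $t=u=0$ gives $2G(0)=4G(0)$, hence $G(0)=0$. Setting $t=0$ gives $G(u)+G(-u)=2G(u)$, so $G$ is even.

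\textbf{Step 2: integer and rational homogeneity.} Take $u=t$ and then $u=(n-1)t$ in the equation to obtain the recursion
\[
G((n+1)t)=2G(nt)+2G(t)-G((n-1)t).
\]
By induction this yields $G(nt)=n^2G(t)$ for all $n\in\mathbb{N}$; evenness extends it to $n\in\mathbb{Z}$. Replacing $t$ by $t/m$ then gives $G(rt)=r^2G(t)$ for every $r\in\mathbb{Q}$. With $k:=G(1)$ we get $G(r)=kr^2$ on $\mathbb{Q}$.

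\textbf{Step 3: the continuous case.} If $G$ is continuous, density of $\mathbb{Q}$ immediately gives $G(t)=kt^2$ on $\mathbb{R}$.

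\textbf{Step 4: weaker regularity hypotheses.} I would reduce each of these to the continuous case through the symmetric biadditive form
\[
B(t,u):=\tfrac14\bigl(G(t+u)-G(t-u)\bigr),
\]
which satisfies $G(t)=B(t,t)$. Verifying that $B$ is additive in each variable is the standard Jordan--von Neumann computation, using the quadratic equation several times. It follows that for fixed $u$ the map $t\mapsto B(t,u)$ is additive.

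Next I would transfer regularity from $G$ to $B(\cdot,u)$:
\begin{itemize}
\item If $G$ is bounded on $[0,\delta)$, then by evenness it is bounded on $(-\delta,\delta)$. For $u$ small, $B(\cdot,u)$ is then bounded near $0$, so it is a continuous additive function and hence linear in $t$.
\item If $G$ is measurable, or bounded on a set of positive measure, Steinhaus/Ostrowski-type theorems give the same conclusion for the additive maps.
\item Continuity at a single point propagates to continuity everywhere via the same additive structure.
\end{itemize}
Linearity of $B$ in each variable then gives $B(t,u)=ktu$, and so $G(t)=kt^2$.

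\textbf{Main obstacle.} The delicate part is Step 4, specifically turning boundedness of $G$ on a small set into boundedness of the additive functions $B(\cdot,u)$. I would cite the classical theorems for additive functions (Kannappan, Kuczma) rather than reprove them. In the present application $G$ is continuous, so Steps 1--3 suffice.
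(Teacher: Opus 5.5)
The paper gives no proof of this statement to compare against. It is quoted from Kannappan's book, and the paper only invokes it for continuous $G$, in the proof of Theorem~\ref{thm:classification}(ii). For that case your Steps 1--3 are already a complete, self-contained argument.

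Your treatment of the weaker hypotheses follows the standard route: the Jordan--von Neumann biadditive form $B$, followed by regularity theorems for additive functions. It is correct, but three routine points in Step 4 are only gestured at and should be written out.

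First, in the bounded case you only obtain $B(t,u)=\beta(u)\,t$ for $|u|<\delta/2$. Extend this to all $u$ via $B(t,u)=N\,B(t,u/N)$, with an integer $N$ such that $|u/N|<\delta/2$. Then symmetry $B(t,u)=B(u,t)$ at $t=1$ gives $\beta(u)=\beta(1)\,u$. Hence $B(t,u)=ktu$ with $k=B(1,1)=G(1)$.

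Second, suppose $|G|\le M$ on a measurable set $E$ of positive Lebesgue measure $\lambda(E)$, shrunk if necessary to have finite measure. Then $|B(t,u)|\le M/2$ for all $t\in(E-u)\cap(E+u)$. This set has measure $\lambda\bigl(E\cap(E+2u)\bigr)$, which is positive for small $u$ by the Steinhaus continuity lemma. Ostrowski's theorem then gives linearity of $B(\cdot,u)$ for small $u$, and the same extension step finishes. In the measurable case no small-$u$ restriction arises, since $B(\cdot,u)$ is measurable for every $u$.

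Third, for continuity at a single point $t_0$, the phrase ``propagates via the additive structure'' is too vague to count as an argument. The clean reduction is the identity $G(u)=\tfrac12\bigl(G(t_0+u)+G(t_0-u)\bigr)-G(t_0)$, which is the equation with $t=t_0$. It shows that boundedness of $G$ near $t_0$ gives boundedness near $0$, which places you in the bounded case.
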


Combining both cases yields the full~classification.

\begin{Theorem}\label{thm:classification}
The continuous solutions of
\[
F(xy)+F\Big(\frac xy\Big)=2F(x)+2F(y)+cF(x)F(y)
\]
are given as~follows:
\begin{itemize}
\item[(i)] If $c\neq 0$,
\[
F(e^t)=\frac{2}{c}\big(\cosh(\alpha t)-1\big)
\quad\text{or}\quad
F(e^t)=\frac{2}{c}\big(\cos(\alpha t)-1\big),
\qquad \alpha\in\R, \quad\alpha\ne 0.
\]
\item[(ii)] If $c=0$,
\[
F(x)=k(\ln x)^2,
\qquad k\in\R.
\]
\end{itemize}
\end{Theorem}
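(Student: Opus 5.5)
The proof goes through logarithmic coordinates and splits according to whether $c$ vanishes. Given a continuous solution $F$, set $G(t)=F(e^t)$. By Lemma~\ref{lem:log-bilinear}, $G$ satisfies \eqref{eq:G-bilinear}. Setting $t=u=0$ there gives $2G(0)=4G(0)+cG(0)^2$, so $G(0)=0$ or $G(0)=-2/c$. In the second case we put $u=0$ in \eqref{eq:G-bilinear}: this gives $2G(t)=2G(t)-2+\ldots$, i.e.\ $2G(t)=2G(t)+2G(0)+cG(0)G(t)$, which forces $G(t)=-2/c$ identically. We therefore take $F(1)=0$, the normalization of the paper, and also assume $F$ nonconstant. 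Reciprocity from Lemma~\ref{lem:swap-quotients} then makes $G$ even.

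\textbf{Case $c\neq0$.} Put $H=1+\tfrac c2 G$. By Lemma~\ref{lem:affine-reduction}(i), $H$ is a continuous solution of the d'Alembert equation \eqref{dal} with $H(0)=1$ and $H$ even. By the classical classification of continuous solutions of \eqref{dal} (Acz\'el, Stetk\ae r), $H(t)=\cosh(\alpha t)$ or $H(t)=\cos(\alpha t)$. If we want to make this step self-contained, we can sketch it. Continuity of $H$ with $H(0)=1$ lets us integrate \eqref{dal} in $u$ over a small interval where $\int H\neq0$; this shows $H$ is $C^1$, and bootstrapping gives $C^2$. Differentiating twice in $u$ at $u=0$ yields $H''(t)=H''(0)H(t)$ with $H(0)=1$ and $H'(0)=0$, the latter by evenness. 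Solving this ODE gives $\cosh(\alpha t)$, $\cos(\alpha t)$, or the constant $1$. The constant is excluded because $F$ is nonconstant, which is why $\alpha\neq0$. Then $F(e^t)=\tfrac2c(H(t)-1)$. Conversely, both families are checked by direct substitution, using $\cosh(a+b)+\cosh(a-b)=2\cosh a\cosh b$ and the analogous identity for $\cos$.

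\textbf{Case $c=0$.} Here $G$ satisfies the quadratic equation \eqref{eq:quadratic-fe}, and Theorem~\ref{thrm4} gives $G(t)=kt^2$, i.e.\ $F(x)=k(\ln x)^2$. The converse follows from the parallelogram identity $(t+u)^2+(t-u)^2=2t^2+2u^2$.

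The main obstacle is the regularity bootstrap in the $c\neq0$ case, if it is not simply quoted. The point to watch is choosing $\delta$ so that $\int_0^\delta H\neq0$, which is possible since $H(0)=1$ and $H$ is continuous. Everything else is routine verification.
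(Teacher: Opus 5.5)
Your proposal is correct and follows the paper's proof. You pass to $G(t)=F(e^t)$, set $H=1+\tfrac c2 G$ and invoke the classical classification of continuous d'Alembert solutions when $c\neq 0$ (your ODE sketch is just the standard proof of that cited result), and you apply Theorem~\ref{thrm4} when $c=0$.

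Your preliminary computation $G(0)\bigl(2+cG(0)\bigr)=0$ and the $H\equiv 1$ case of your ODE are worthwhile additions. They show that for $c\neq 0$ the constants $F\equiv -2/c$ and $F\equiv 0$ are also continuous solutions, so the list in (i) is complete only under the nonconstancy assumption, which the paper leaves implicit and you state explicitly.
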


{\begin{proof}
Set $G(t)=F(e^t)$ and 
\(
H(t)=1+\frac{c}{2}G(t), 
\) for $c\neq 0$. 
By Lemma~\ref{lem:affine-reduction}(i), the~function $H$ satisfies the classical d'Alembert equation
with $H$ continuous, even, and~$H(0)=1$.
By the standard classification~\cite{Aczel,AczelDhombres,Kannappan,Kannappan2,Kuczma,Papp,StetkaerBook}, the~solutions are
\[
H(t)=\cosh(\alpha t) \quad \text{or} \quad H(t)=\cos(\alpha t), 
\]
giving 
the two~branches.

If $c=0$, the~equation reduces to Lemma~\ref{lem:affine-reduction}(ii), and~the result follows from \mbox{Theorem~\ref{thrm4}}.
\end{proof}}

 \begin{Proposition}\label{prop:sign-c}
For the hyperbolic branch in Theorem~\ref{thm:classification}(i), 
we have $F(x)\ge 0$ for all $x>0$ if and only if $c>0$.
\end{Proposition}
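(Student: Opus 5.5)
The plan is to compute the sign of $F$ directly from the explicit form of the hyperbolic branch in Theorem~\ref{thm:classification}(i). There, for some $\alpha\neq 0$,
\[
F(e^t)=\frac{2}{c}\bigl(\cosh(\alpha t)-1\bigr),\qquad t\in\R .
\]
Every $x>0$ can be written as $x=e^t$ with $t=\ln x$. So the statement $F(x)\ge 0$ for all $x>0$ is the same as $G(t)\ge0$ for all $t\in\R$, where $G(t)=F(e^t)$.

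The key fact is that $\cosh(\alpha t)-1\ge 0$ for every $t$, with equality only at $t=0$ when $\alpha\neq0$. One way to see this is the identity $\cosh s-1=2\sinh^2(s/2)$. Alternatively, $\cosh s\ge1$ follows from AM--GM applied to $e^{s}$ and $e^{-s}$. Since $\alpha\neq 0$, any $t\ne0$ (for instance $t=1$) gives $\cosh(\alpha t)-1>0$ strictly.

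Both directions then follow from the factor $2/c$. If $c>0$, then $G(t)$ is a nonnegative multiple of a nonnegative quantity, so $F\ge 0$. Conversely, suppose $F\ge0$; note that $c\neq0$ in this branch. Evaluate at $x=e$, i.e.\ $t=1$. There $\cosh\alpha-1>0$, so $F(e)=\frac{2}{c}(\cosh\alpha-1)\ge 0$ forces $2/c>0$, hence $c>0$. When $c<0$ one even gets $F(x)<0$ for all $x\neq1$.

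There is no real obstacle. The only point needing care is that $\alpha\ne0$ must be used to obtain a point where $\cosh(\alpha t)-1$ is strictly positive. Without it, $F\equiv0$ would be nonnegative for either sign of $c$. This assumption is built into Theorem~\ref{thm:classification}(i), and it is also guaranteed by $F$ being nonconstant.
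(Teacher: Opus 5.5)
Your proof is correct and follows the paper's argument: nonnegativity of $\cosh(\alpha t)-1$ combined with the sign of the prefactor $2/c$. You also make explicit that $\alpha\neq 0$ is needed to obtain a point where $\cosh(\alpha t)-1>0$ for the converse direction, a step the paper's one-line proof leaves implicit.
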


\begin{proof}
Since $\cosh(\alpha t)-1\ge 0$ for all $t$,
\[
F(e^t)=\frac{2}{c}\big(\cosh(\alpha t)-1\big)
\]
is nonnegative for all $t$ if and only if $\frac{2}{c}>0$,
that is, $c>0$.
\end{proof}

We now express the hyperbolic branch in $x$-coordinates and 
identify the parameter regime in which the solution admits 
a natural interpretation as a reciprocal cost~function.

\begin{Corollary}\label{cor:cost-interpretation}
Let $c>0$ and consider the hyperbolic branch
\[
F(e^t)=\frac{2}{c}\big(\cosh(\alpha t)-1\big),
\qquad \alpha\in\R.
\]
Then, in~$x$-coordinates,
\[
F(x)=\frac{1}{c}\big(x^{\alpha}+x^{-\alpha}-2\big),
\qquad x>0.
\]
Moreover:
\begin{itemize}
\item[(i)] $F(x)\ge 0$ for all $x>0$;
\item[(ii)] $F(1)=0$;
\item[(iii)] $F(x)=F(1/x)$;
\item[(iv)] if $\alpha\neq 0$, then $F(x)=0$ if and only if $x=1$.
\end{itemize}
In particular, for~$c>0$ and $\alpha\neq 0$, $F$ defines 
a reciprocal cost function on $\Rp$.
\end{Corollary}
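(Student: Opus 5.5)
The plan is to rewrite the hyperbolic branch in $x$-coordinates by direct substitution, and then to read off properties (i)--(iv) from elementary facts about $\cosh$ and $t\mapsto x^{\alpha}$. Put $x=e^t$, so that $t=\ln x$ and $x^{\alpha}=e^{\alpha t}$. Using $\cosh(\alpha t)=\tfrac12\bigl(e^{\alpha t}+e^{-\alpha t}\bigr)$, we get
\[
F(x)=\frac{2}{c}\Big(\tfrac12\big(x^{\alpha}+x^{-\alpha}\big)-1\Big)=\frac{1}{c}\big(x^{\alpha}+x^{-\alpha}-2\big).
\]
This is valid for every $x>0$, because $t\mapsto e^t$ is a bijection from $\R$ onto $\Rp$.

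For the four properties:
\begin{itemize}
\item[(i)] This follows from Proposition~\ref{prop:sign-c} with $c>0$. Directly, $x^{\alpha}+x^{-\alpha}-2=\bigl(x^{\alpha/2}-x^{-\alpha/2}\bigr)^2\ge 0$.
\item[(ii)] Substituting $x=1$ gives $1+1-2=0$.
\item[(iii)] The expression $x^{\alpha}+x^{-\alpha}$ is invariant under $x\mapsto 1/x$. Alternatively, this is Lemma~\ref{lem:swap-quotients}, since the combiner $2u+2v+c\,uv$ is symmetric.
\item[(iv)] By the square identity, $F(x)=0$ holds if and only if $x^{\alpha/2}=x^{-\alpha/2}$, that is, $x^{\alpha}=1$. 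Taking logarithms gives $\alpha\ln x=0$. Since $\alpha\neq0$, this forces $x=1$. Equivalently, $\cosh(\alpha t)=1$ exactly when $\alpha t=0$.
\end{itemize}

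The concluding sentence then only collects these facts. Together, nonnegativity, vanishing exactly at $x=1$, and reciprocal symmetry are what the paper means by a reciprocal cost function on $\Rp$.

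No step is a genuine obstacle. The only point needing care is (iv), where $\alpha\neq0$ is essential: for $\alpha=0$ we have $F\equiv0$, which is constant and in any case excluded by the standing assumptions.
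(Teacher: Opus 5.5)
Your proposal is correct and matches the paper. The paper gives this corollary no separate proof and treats it as immediate from the substitution $x=e^t$, the identity $\cosh(\alpha t)=\tfrac12(x^{\alpha}+x^{-\alpha})$, and Proposition~\ref{prop:sign-c}, which is exactly your route; your square identity $x^{\alpha}+x^{-\alpha}-2=\bigl(x^{\alpha/2}-x^{-\alpha/2}\bigr)^2$ handles (i) and (iv) together, and you correctly note that $\alpha\neq 0$ is what (iv) needs.
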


 \begin{Corollary}
For every $c\in\R$, the~equation
\[
F(xy)+F\Big(\frac xy\Big)=2F(x)+2F(y)+cF(x)F(y)
\]
admits a continuous nonconstant solution
$F:\Rp\to\R$ satisfying $F(1)=0$ and $F(x)=F(1/x)$.
\end{Corollary}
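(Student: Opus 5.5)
The plan is to exhibit explicit solutions, splitting into the cases $c\neq 0$ and $c=0$, and to verify them directly rather than only appealing to the classification of Theorem~\ref{thm:classification}. That theorem lists the candidate forms, but this statement is an existence claim, so each candidate must actually be checked against the equation.

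\emph{Case $c\neq 0$.} I will take $F(x)=\frac{2}{c}\bigl(\cosh(\ln x)-1\bigr)=\frac1c\bigl(x+x^{-1}-2\bigr)$, which is the hyperbolic branch with $\alpha=1$. Put $G(t)=F(e^t)$ and $H(t)=1+\frac c2 G(t)$, so that $H(t)=\cosh t$. The addition formula
\[
\cosh(t+u)+\cosh(t-u)=2\cosh t\cosh u
\]
shows that $H$ satisfies the d'Alembert equation \eqref{dal}. Running the algebra in the proof of Lemma~\ref{lem:affine-reduction}(i) in reverse, \eqref{dal} for $H$ is equivalent to \eqref{eq:G-bilinear} for $G$. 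The only step needed is to divide by $c/2\neq 0$. Returning to $x=e^t$ and $y=e^u$ then gives \eqref{eq:F-bilinear-law}. Continuity is clear. We also have $F(1)=0$, $F(1/x)=F(x)$, and $F(e)\neq 0$, so $F$ is nonconstant.

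\emph{Case $c=0$.} I will take $F(x)=(\ln x)^2$, so that $G(t)=t^2$. The parallelogram identity
\[
(t+u)^2+(t-u)^2=2t^2+2u^2
\]
gives \eqref{eq:quadratic-fe}, which is exactly \eqref{eq:F-bilinear-law} with $c=0$. Continuity, $F(1)=0$, reciprocity and nonconstancy are again immediate.

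There is no real obstacle here. The only point needing care is that Lemma~\ref{lem:affine-reduction} is stated in one direction only, so for $c\neq0$ I must note that the affine substitution $G\mapsto H$ is invertible. That is why I verify the equation directly instead of citing the classification theorem.
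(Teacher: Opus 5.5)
Your proof is correct and takes the same approach as the paper: you exhibit the explicit solutions of Theorem~\ref{thm:classification}, namely the hyperbolic branch with $\alpha=1$ for $c\neq 0$ and $k(\ln x)^2$ with $k=1$ for $c=0$. Your direct check via the $\cosh$ addition formula and the parallelogram identity, with the remark that dividing by $c/2\neq 0$ makes the affine substitution reversible, supplies the sufficiency direction that the paper's one-line citation leaves implicit, since that theorem's proof only establishes necessity.
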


\begin{proof}
The explicit solutions given in Theorem~\ref{thm:classification}
provide such functions for each $c\in\R$.
\end{proof}

\begin{Corollary}\label{cor:convex-case}
Under the assumptions of Lemma~\ref{lem:symmetry-convex-min},
assume in addition that $F$ is convex and that $c\neq 0$ in
\eqref{eq:F-bilinear-law}. Then $c>0$, and~only the hyperbolic branch is admissible, i.e.,
\[
F(e^t)=\frac{2}{c}\big(\cosh(\alpha t)-1\big).
\]
Moreover, $\alpha\ge 1$.
\end{Corollary}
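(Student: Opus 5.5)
The plan is to start from the classification in Theorem~\ref{thm:classification}(i), since $c\neq 0$. Every continuous nonconstant solution is then either the hyperbolic function $F(e^t)=\frac{2}{c}(\cosh(\alpha t)-1)$ or the trigonometric function $F(e^t)=\frac{2}{c}(\cos(\alpha t)-1)$, with $\alpha\neq 0$. Since both $\cosh$ and $\cos$ are even, I may take $\alpha>0$ without loss of generality. The one structural input from convexity is Lemma~\ref{lem:symmetry-convex-min}: $x=1$ is a global minimum of $F$, so $F\ge F(1)=0$ on $\Rp$. The argument then has three steps: exclude the trigonometric branch, fix the sign of $c$, and bound $\alpha$.

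\emph{Step 1: excluding the trigonometric branch.} Here $G(t)=F(e^t)$ is periodic with period $T=2\pi/\alpha$ and $G(0)=G(T)=0$. By Lemma~\ref{lem:symmetry-convex-min}, the value $0$ is the minimum of $F$, and it is attained at the two points $x=1$ and $x=e^{T}$. A convex function that attains its minimum at two points is constant, equal to that minimum, on the segment between them. Hence $G\equiv 0$ on $[0,T]$, and by periodicity $G\equiv 0$ on $\R$. This contradicts the assumption that $F$ is nonconstant, so only the hyperbolic branch survives.

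\emph{Step 2: the sign of $c$.} For the hyperbolic branch, Lemma~\ref{lem:symmetry-convex-min} gives $F\ge 0$. Since $F$ is nonconstant, Proposition~\ref{prop:sign-c} then forces $c>0$. Directly: if $c<0$, then $F\le 0=F(1)$, so $x=1$ would be a maximum as well as the minimum, and $F$ would be constant.

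\emph{Step 3: the bound on $\alpha$.} I expect this to be the only genuinely computational part, but it should be short. Using Corollary~\ref{cor:cost-interpretation}, write $F(x)=\frac1c(x^{\alpha}+x^{-\alpha}-2)$ and differentiate twice:
\[
F''(x)=\frac{\alpha}{c}\Bigl((\alpha-1)x^{\alpha-2}+(\alpha+1)x^{-\alpha-2}\Bigr).
\]
Convexity of $F$ on $\Rp$ is equivalent to $F''\ge 0$ everywhere. Since $c>0$ and $\alpha>0$, this reduces to $(\alpha-1)x^{2\alpha}+(\alpha+1)\ge 0$ for all $x>0$, after multiplying by $x^{\alpha+2}$. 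If $0<\alpha<1$, the left side tends to $-\infty$ as $x\to\infty$, which is a contradiction. Hence $\alpha\ge 1$. Conversely, for $\alpha\ge 1$ both terms are nonnegative, so the condition is also sufficient and the statement is consistent. The care needed here is mainly in stating the normalization $\alpha>0$ explicitly, since the corollary's claim $\alpha\ge 1$ is meant up to the sign symmetry $\alpha\mapsto-\alpha$.
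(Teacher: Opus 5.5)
Your proof is correct and follows the paper's skeleton: Theorem~\ref{thm:classification}(i), then $F\ge F(1)=0$ from Lemma~\ref{lem:symmetry-convex-min}, then $c>0$ via Proposition~\ref{prop:sign-c}, then a second-derivative test for $\alpha\ge1$. Your $F''(x)=\frac{\alpha}{c}\bigl((\alpha-1)x^{\alpha-2}+(\alpha+1)x^{-\alpha-2}\bigr)$ is the paper's $\frac{2}{cx^2}\bigl(\alpha^2\cosh(\alpha t)-\alpha\sinh(\alpha t)\bigr)$ written in $x$. Your limit $x\to\infty$ replaces the paper's $\tanh(\alpha t)\to1$.

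The real difference is how you exclude the cosine branch. The paper notes that $G''(t)=-\frac{2}{c}\alpha^2\cos(\alpha t)$ changes sign and infers that $F''$ does. That inference is not automatic, because $x^2F''(x)=G''(t)-G'(t)$, not $G''(t)$. It is true here, since $G''-G'=\frac{2\alpha}{c}\bigl(\sin(\alpha t)-\alpha\cos(\alpha t)\bigr)$ is a nontrivial sinusoid, but that has to be checked. Your argument --- $F\ge0$ and $F(1)=F(e^{2\pi/\alpha})=0$, so convexity forces $F\equiv0$ on $[1,e^{2\pi/\alpha}]$ --- needs no derivatives, treats both signs of $c$ at once, and avoids that gap. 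The paper's route, in exchange, handles both branches with the same second-derivative computation.
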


\begin{proof}
By Lemma~\ref{lem:symmetry-convex-min}, $F(1)$ is a global minimum.
Since $F(1)=0$, we have $F(x)\ge 0$ for all $x>0$.
Because $c\neq 0$, Theorem~\ref{thm:classification}(i) holds.
The cosine branch
\[
F(e^t)=\frac{2}{c}\big(\cos(\alpha t)-1\big)
\]
is not convex on $\Rp$, since
\[
G''(t)=-\frac{2}{c}\alpha^2\cos(\alpha t)
\]
changes sign. Hence, $F''$ also changes sign, so $F$ cannot be convex. Therefore, only the hyperbolic branch remains
\[
F(e^t)=\frac{2}{c}\big(\cosh(\alpha t)-1\big).
\]
Since $F(x)\ge 0$ for all $x>0$, Proposition~\ref{prop:sign-c}
implies $c>0$.
Further, we write $t=\ln x$ and set $U(t):=\cosh(\alpha t)-1$.
Then $F(x)=\frac{2}{c}U(\ln x)$ and
\[
F''(x)=\frac{2}{c x^2}\big(U''(t)-U'(t)\big)
=\frac{2}{c x^2}\big(\alpha^2\cosh(\alpha t)-\alpha\sinh(\alpha t)\big).
\]
Convexity of $F$ on $\Rp$ means $F''(x)\ge 0$ for all $x>0$, i.e.,
\[
\alpha^2\cosh(\alpha t)-\alpha\sinh(\alpha t)\ge 0
\qquad\text{for all }t\in\R.
\]
Since $\cosh(\alpha t)=\cosh(|\alpha|t)$, we may assume without loss of generality that $\alpha>0$.
{(If $\alpha=0$, then $G\equiv 0$, contradicting nonconstancy; if $\alpha<0$, replace $\alpha$ by $|\alpha|$ since $\cosh$ is even.)}
Dividing by $\alpha>0$, we obtain
\[
\alpha\cosh(\alpha t)\ge \sinh(\alpha t)\qquad\text{for all }t\in\R.
\]
For $t>0$, it is equivalent to
\[
\alpha \ge \tanh(\alpha t).
\]
Since $\tanh(\alpha t) \to 1$ as $t \to \infty$,
the above inequality implies $\alpha \ge 1$. 
For $t<0$: since $\alpha>0$ and $\sinh(\alpha t)<0$,
the inequality $\alpha\cosh(\alpha t)\ge\sinh(\alpha t)$ holds trivially. For~$t=0$, both sides vanish.
Hence, the condition $\alpha\ge 1$ is both necessary and sufficient for all $t\in\R$.

Conversely, if~$\alpha \ge 1$, then
\[
\alpha \ge \tanh(\alpha t)
\qquad \text{for all } t,
\]
because $\tanh(\alpha t) < 1$ for every finite $t$.
Hence, $\alpha \ge 1$ is both necessary and sufficient
for global convexity.
\end{proof}

\section{D'Alembert Inevitability for \emph{n}-Dimensional~Cost}\label{sec:Ndim}

In this part, we extend the inevitability result to functions defined on 
$\Rp^n$.
Let $\mathbf{x}=(x_1,\dots,x_n)$ and $\mathbf{y}=(y_1,\dots,y_n)$
be elements of $\Rp^n$, with~
\[
\mathbf{x}\cdot\mathbf{y}=(x_1 y_1,\dots,x_n y_n),\qquad
\mathbf{x}/\mathbf{y}=(x_1/y_1,\dots,x_n/y_n),\qquad
\mathbf{1}=(1,\dots,1).
\]

We also write $\ln \mathbf{x} := (\ln x_1, \ldots, \ln x_n) \in \mathbb{R}^n$ and, 
for ${\boldsymbol{\alpha}} = (\alpha_1, \ldots, \alpha_n) \in \mathbb{R}^n$, use the~notation
\[
\mathbf{x}^{\boldsymbol{\alpha}} := \prod_{k=1}^{n} x_k^{\alpha_k}, 
\qquad
\boldsymbol{\alpha} \cdot \ln \mathbf{x} := \sum_{k=1}^{n} \alpha_k \ln x_k 
= \ln(\mathbf{x}^{\boldsymbol{\alpha}}).
\]

\begin{Definition}\label{def:ndim-poly-law}
A function $F:\Rp^n\to\R$ satisfies an n-dimensional polynomial composition law
if there exists a polynomial $P\in\R[u,v]$ such that for all $\mathbf{x},\mathbf{y}\in\Rp^n$,
\begin{equation}\label{eq:ndim-law}
F(\mathbf{x}\cdot\mathbf{y})+F(\mathbf{x}/\mathbf{y})=P\big(F(\mathbf{x}),F(\mathbf{y})\big).
\end{equation}
\end{Definition}

The algebraic classification of the polynomial combiner $P$
depends only on the functional equation and on the nondegeneracy of the range of $F$,
and therefore it is independent of the dimension $n$.
 
\begin{Theorem}\label{thm:ndim-bilinear}
Let $F:\Rp^n\to\R$ be continuous and nontrivial, with~$F(\mathbf{1})=0$, { where $\mathbf{1}=(1,\dots,1)$},
and suppose \eqref{eq:ndim-law} holds with a symmetric polynomial combiner $P$
of total degree at most two.
Then there exists $c\in\R$ such that
\[
P(u,v)=2u+2v+c\,uv.
\]
\end{Theorem}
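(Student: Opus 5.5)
The plan is to repeat the one-dimensional argument verbatim, checking that each step uses only the nondegeneracy of $\operatorname{Range}(F)$ and the group structure of $\Rp^n$ under coordinatewise multiplication, never the dimension.

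\emph{Step 1: reciprocity.} Write \eqref{eq:ndim-law} for $(\mathbf{x},\mathbf{y})$ and for $(\mathbf{y},\mathbf{x})$. Coordinatewise multiplication is commutative, so $\mathbf{x}\cdot\mathbf{y}=\mathbf{y}\cdot\mathbf{x}$. Since $P$ is symmetric, subtracting the two equations gives $F(\mathbf{x}/\mathbf{y})=F(\mathbf{y}/\mathbf{x})$. Setting $\mathbf{y}=\mathbf{1}$ then yields $F(\mathbf{x})=F(\mathbf{1}/\mathbf{x})$, exactly as in Lemma~\ref{lem:swap-quotients}.

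\emph{Step 2: the boundary identity.} Put $\mathbf{x}=\mathbf{1}$ in \eqref{eq:ndim-law}. By reciprocity this gives $2F(\mathbf{y})=P(0,F(\mathbf{y}))$ for all $\mathbf{y}$. The set $\Rp^n$ is connected and $F$ is continuous and nonconstant (``nontrivial''), so $\operatorname{Range}(F)$ is a nondegenerate interval. The one-variable polynomial $P(0,v)-2v$ vanishes on this interval, hence vanishes identically. Symmetry of $P$ then gives $P(u,0)=2u$ as well. This is Lemma~\ref{lem:axis-at-F1} and Corollary~\ref{lem:boundary-conditions} in the present setting.

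\emph{Step 3: coefficient comparison.} Write $P$ in the symmetric quadratic form of Lemma~\ref{lem:symmetry-reduction}, $P(u,v)=a+b(u+v)+d\,uv+e(u^2+v^2)$. Substituting $u=0$ and comparing with $P(0,v)=2v$ gives $a=0$, $b=2$ and $e=0$, as in Theorem~\ref{thm:quadratic-relation} and Corollary~\ref{cor:normalization}. Renaming $d=c$ yields $P(u,v)=2u+2v+c\,uv$.

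The only point needing any care is the assertion that $\operatorname{Range}(F)$ contains a nondegenerate interval. In dimension one the paper used continuity on $\Rp$; here it follows because the continuous image of the connected set $\Rp^n$ is an interval, and that interval is nondegenerate because $F$ is nonconstant. After this observation, every remaining step is purely algebraic and identical to the one-dimensional proof of Theorem~\ref{thm:bilinear-forced}.
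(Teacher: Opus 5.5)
Your proposal is correct and follows essentially the same route as the paper: reciprocity from swapping $\mathbf{x}$ and $\mathbf{y}$, then $P(0,v)=2v$ from $\mathbf{x}=\mathbf{1}$ and the nondegenerate range, then coefficient comparison in the symmetric quadratic form to get $a=0$, $b=2$, $e=0$. Your justification of the nondegenerate range via connectedness of $\mathbb{R}_{>0}^n$ spells out a point the paper asserts without comment.
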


\begin{proof}
Since $P$ is symmetric, from~\eqref{eq:ndim-law}, we get
\[
F(\mathbf{x}/\mathbf{y})=F(\mathbf{y}/\mathbf{x})
\qquad (\mathbf{x},\mathbf{y}\in\Rp^n).
\]
By substituting $\mathbf{y}=\mathbf{1}$, we obtain the reciprocity
$F(\mathbf{z})=F(\mathbf{z}^{-1})$ for all $\mathbf{z}\in\Rp^n$.

Now set $\mathbf{x}=\mathbf{1}$ in \eqref{eq:ndim-law}. Using $F(\mathbf{1})=0$
and reciprocity, we obtain
\[
P(0,F(\mathbf{y}))=F(\mathbf{y})+F(\mathbf{y}^{-1})=2F(\mathbf{y})
\qquad (\mathbf{y}\in\Rp^n).
\]
Since $F$ is continuous and nontrivial with $F(\mathbf{1})=0$,
its range contains a nondegenerate interval $I$ with $0\in I$.
Hence, the polynomial $v\mapsto P(0,v)-2v$ vanishes on $I$, so
\[
P(0,v)=2v\qquad \text{for all }v\in\R.
\]
By symmetry, also $P(u,0)=2u$ for all $u\in\R$.

Let us 
write a general symmetric quadratic polynomial
\[
P(u,v)=a+b(u+v)+c\,uv+e(u^2+v^2).
\]
Then
\[
P(0,v)=a+bv+ev^2=2v\quad \text{for all }v,
\]
so $a=0$, $b=2$, and~$e=0$. Therefore
\[
P(u,v)=2u+2v+c\,uv
\]
This completes the proof.
\end{proof}

We now classify the corresponding solutions. In~logarithmic coordinates $\mathbf{t}=(\ln x_1,\dots,\ln x_n)\in\R^n$,
define
\[ 
G(\mathbf{t})=F(e^{t_1},\dots,e^{t_n}), \qquad \mathbf{t}\in\R^n. \]
{ We treat $\mathbf{t}\in\R^n$ as a column vector.}
If $F(\mathbf{x})=F(\mathbf{x}^{-1})$, then $G$ is even,
\(
G(-\mathbf{t})=G(\mathbf{t}).
\)

Assume first that $c\neq 0$ and define
\(
H(\mathbf{t})=1+\frac{c}{2}G(\mathbf{t}).
\)
Then $H$ is continuous and satisfies the $n$-dimensional d'Alembert equation
\[
H(\mathbf{t}+\mathbf{u})+H(\mathbf{t}-\mathbf{u})
=
2H(\mathbf{t})H(\mathbf{u}),
\qquad \mathbf{t},\mathbf{u}\in\R^n,
\]
with $H(\mathbf{0})=1$ and $H$ even.\\

In the following theorem, we will classify the~solutions. 



\begin{Theorem}\label{thm:ndim-solutions}
All continuous solutions of
\begin{equation}\label{RCL}
F(\mathbf{x}\cdot\mathbf{y})+F(\mathbf{x}/\mathbf{y})
=2F(\mathbf{x})+2F(\mathbf{y})+c\,F(\mathbf{x})F(\mathbf{y}),
\qquad F(\mathbf{1})=0,
\end{equation}
are as~follows:
\begin{itemize}
\item[{(i)}] If $c\neq 0$, then there exists 
$\boldsymbol{\alpha}=(\alpha_1,\dots,\alpha_n)\in\R^n$
such that either
\[
F(\mathbf{x})
=
\frac{2}{c}\left(
\cosh\!\Bigl(\sum_{k=1}^n \alpha_k \ln x_k\Bigr)-1
\right)
=
\frac{1}{c}\left(
\prod_{k=1}^n x_k^{\alpha_k}
+
\prod_{k=1}^n x_k^{-\alpha_k}
-2
\right),
\]
or
\[
F(\mathbf{x})
=
\frac{2}{c}\left(
\cos\!\Bigl(\sum_{k=1}^n \alpha_k \ln x_k\Bigr)-1
\right).
\]

\item[{(ii)}] If $c=0$, then
\[
F(\mathbf{x})=\sum_{i,j=1}^n a_{ij}\,\ln x_i\,\ln x_j
\]
for some symmetric matrix $\mathbf{A}=(a_{ij})\in\R^{n\times n}$.
\end{itemize}
\end{Theorem}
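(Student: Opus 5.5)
The plan is to pass to logarithmic coordinates and treat the two cases $c\neq0$ and $c=0$ separately, as in the one-dimensional Theorem~\ref{thm:classification}; only the multi-dimensional structure of the solutions is new. Set $G(\mathbf{t})=F(e^{t_1},\dots,e^{t_n})$, so that \eqref{RCL} becomes $G(\mathbf{t}+\mathbf{u})+G(\mathbf{t}-\mathbf{u})=2G(\mathbf{t})+2G(\mathbf{u})+c\,G(\mathbf{t})G(\mathbf{u})$ on $\R^n$, with $G(\mathbf{0})=0$. Taking $\mathbf{t}=\mathbf{0}$ gives $G(\mathbf{u})+G(-\mathbf{u})=2G(\mathbf{u})$, so $G$ is even.

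\emph{Case $c\neq0$.} The computation of Lemma~\ref{lem:affine-reduction}(i) goes through verbatim with vector arguments. Hence $H=1+\frac c2G$ is a continuous, even, real-valued solution of the $n$-dimensional d'Alembert equation with $H(\mathbf{0})=1$. I will invoke the classical description of continuous d'Alembert solutions on an abelian topological group (see \cite{StetkaerBook,AczelDhombres}): $H(\mathbf{t})=\tfrac12\bigl(g(\mathbf{t})+g(-\mathbf{t})\bigr)$ for some continuous homomorphism $g:(\R^n,+)\to(\C^\times,\cdot)$. Continuous such homomorphisms are $g(\mathbf{t})=e^{\boldsymbol{\lambda}\cdot\mathbf{t}}$ with $\boldsymbol{\lambda}\in\C^n$, so $H(\mathbf{t})=\cosh(\boldsymbol{\lambda}\cdot\mathbf{t})$.

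Next I will impose that $H$ is real-valued. Write $\boldsymbol{\lambda}=\boldsymbol{\beta}+i\boldsymbol{\gamma}$ with $\boldsymbol{\beta},\boldsymbol{\gamma}\in\R^n$. Then
\[
\operatorname{Im}\cosh(\boldsymbol{\lambda}\cdot\mathbf{t})=\sinh(\boldsymbol{\beta}\cdot\mathbf{t})\sin(\boldsymbol{\gamma}\cdot\mathbf{t}),
\]
and this must vanish for all $\mathbf{t}$. If both $\boldsymbol{\beta}\neq\mathbf 0$ and $\boldsymbol{\gamma}\neq\mathbf 0$, the zero set of the right-hand side is a union of hyperplanes, which has empty interior; so this is impossible. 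Therefore $\boldsymbol{\gamma}=\mathbf 0$, giving $H=\cosh(\boldsymbol{\alpha}\cdot\mathbf{t})$, or $\boldsymbol{\beta}=\mathbf 0$, giving $H=\cos(\boldsymbol{\alpha}\cdot\mathbf{t})$. Substituting into $G=\frac2c(H-1)$ and $\mathbf{t}=\ln\mathbf{x}$ yields (i), and rewriting $\cosh$ with $\mathbf{x}^{\pm\boldsymbol{\alpha}}$ gives the product form. Conversely, I will check directly that these functions solve \eqref{RCL}.

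\emph{Case $c=0$.} Here $G$ solves the quadratic functional equation on $\R^n$. I will follow the Jordan--von Neumann route and define $B(\mathbf{t},\mathbf{u})=\tfrac14\bigl(G(\mathbf{t}+\mathbf{u})-G(\mathbf{t}-\mathbf{u})\bigr)$. The standard argument from the quadratic equation and evenness shows that $B$ is symmetric and additive in each variable, with $B(\mathbf{t},\mathbf{t})=G(\mathbf{t})$. Continuity then makes $B$ bilinear, so $G(\mathbf{t})=\mathbf{t}^{\top}\mathbf{A}\mathbf{t}$ with $a_{ij}=B(\mathbf e_i,\mathbf e_j)$ symmetric. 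This gives (ii), and a direct check shows every quadratic form solves \eqref{RCL} with $c=0$.

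The main obstacle is justifying the structural step in case $c\neq0$, namely that continuous d'Alembert solutions on $\R^n$ are of the form $\cosh(\boldsymbol{\lambda}\cdot\mathbf{t})$. If I want to avoid citing the group-theoretic theorem, I will proceed in three steps.
\begin{enumerate}
\item Restrict to lines: each $s\mapsto H(s\mathbf{v})$ solves the one-dimensional equation. Using the usual integration trick (integrate the equation in $\mathbf{u}$ against a bump function), I will show $H$ is $C^\infty$.
\item Differentiate twice in $\mathbf{u}$ at $\mathbf{u}=\mathbf 0$ to obtain $\nabla^2H(\mathbf{t})=H(\mathbf{t})\,\mathbf{M}$ with $\mathbf{M}=\nabla^2H(\mathbf 0)$.
\item Use the compatibility of third derivatives, $M_{ij}\partial_kH=M_{kj}\partial_iH$, together with evenness, to force $\mathbf{M}$ to have rank at most one, $\mathbf{M}=\pm\boldsymbol{\alpha}\boldsymbol{\alpha}^{\top}$. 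Then $H$ is constant along $\boldsymbol{\alpha}^{\perp}$ and reduces to the one-dimensional classification.
\end{enumerate}
The rank-one argument in step 3 is the delicate part. The first thing I would try there is to exploit that the gradient $\nabla H(\mathbf{t})$ spans the image of $\mathbf{M}$ at generic points.
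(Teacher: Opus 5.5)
Your proposal is correct and follows essentially the same route as the paper. Both pass to logarithmic coordinates. For $c\neq0$ both substitute $H=1+\frac{c}{2}G$, cite the classification $H(\mathbf{t})=\cosh(\boldsymbol{\lambda}\cdot\mathbf{t})$ with $\boldsymbol{\lambda}\in\C^n$, and then use that $H$ is real-valued; for $c=0$ both use the classification of continuous quadratic functionals.

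Your identity $\operatorname{Im}\cosh(\boldsymbol{\lambda}\cdot\mathbf{t})=\sinh(\boldsymbol{\beta}\cdot\mathbf{t})\sin(\boldsymbol{\gamma}\cdot\mathbf{t})$ and the Jordan--von Neumann construction make explicit what the paper asserts or cites. Your optional self-contained step 3 also works, since $M_{ij}\partial_kH=M_{kj}\partial_iH$ forces every column of $\mathbf{M}$ to be parallel to any nonzero $\nabla H(\mathbf{t})$.
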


\begin{proof}
\begin{itemize}
\item[(i)] Case $c\neq 0$. Since $F$ is continuous, $G(\mathbf{t})=F(e^{t_1},\dots,e^{t_n})$ and $H(\mathbf{t})=1+\frac{c}{2}\,G(\mathbf{t})$ are continuous with $H(\mathbf{0})=1$. { A direct computation shows that $H$ satisfies
\[
H(\mathbf{t}+\mathbf{u})+H(\mathbf{t}-\mathbf{u})=2H(\mathbf{t})H(\mathbf{u}),
\qquad \mathbf{t},\mathbf{u}\in\mathbb{R}^n.
\]
Thus, $H$ satisfies the classical d’Alembert functional equation on $\mathbb{R}^n$ in the vector notation used in this section. }
By the known classification of continuous solutions of the
$n$-dimensional d'Alembert equation  on $\R^n$
(see~\cite{Aczel,Kannappan2,StetkaerBook}),
there exists $\boldsymbol{\lambda}\in\C^n$ such~that
\[
H(\mathbf{t})
=\frac12\Big(e^{\boldsymbol{\lambda}\cdot\mathbf{t}}
+e^{-\boldsymbol{\lambda}\cdot\mathbf{t}}\Big)
=\cosh(\boldsymbol{\lambda}\cdot\mathbf{t}).
\]
If $H$ is real-valued for all $\mathbf{t}\in\R^n$, then
$\boldsymbol{\lambda}$ must be either real or purely imaginary.
Indeed, if~$\boldsymbol{\lambda}$ has both nonzero real and imaginary parts,
then $H(\mathbf{t})$ 
cannot remain real for all $\mathbf{t}\in\R^n$.
Writing $\boldsymbol{\lambda}=\boldsymbol{\alpha}\in\R^n$
or $\boldsymbol{\lambda}=i\boldsymbol{\alpha}$
gives the two real branches
\[
H(\mathbf{t})=\cosh(\boldsymbol{\alpha}\cdot\mathbf{t})
\quad\text{or}\quad
H(\mathbf{t})=\cos(\boldsymbol{\alpha}\cdot\mathbf{t}).
\]
Since $G(\mathbf{t})=\frac{2}{c}(H(\mathbf{t})-1)$ and $\mathbf{t}=\ln\mathbf{x}$,
we obtain
\[
F(\mathbf{x})
=\frac{2}{c}\Big(\cosh(\boldsymbol{\alpha}\cdot\ln\mathbf{x})-1\Big)
=\frac{1}{c}\Big(\mathbf{x}^{\boldsymbol{\alpha}}+\mathbf{x}^{-\boldsymbol{\alpha}}-2\Big),
\]
or
\[
F(\mathbf{x})
=\frac{2}{c}\Big(\cos(\boldsymbol{\alpha}\cdot\ln\mathbf{x})-1\Big).
\]

\item[(ii)] Case $c=0$. Then the equation for $G$ reduces to
\[
G(\mathbf{t}+\mathbf{u})+G(\mathbf{t}-\mathbf{u})=2G(\mathbf{t})+2G(\mathbf{u}),
\qquad \mathbf{t},\mathbf{u}\in\R^n.
\]
This is a Jensen-type quadratic functional equation on $\R^n$.
(see~\cite{AczelDhombres,Kannappan2}).
 {By the standard classification of continuous solutions of the quadratic Jensen-type equation on $\R^n$ (see~\cite{AczelDhombres,Kannappan2}), every solution has the form
\[
G(\mathbf{t})=\mathbf{t}^T \mathbf{A}\,\mathbf{t}, \qquad \mathbf{t}\in\R^n,
\]
where $\mathbf{A}=(a_{ij})\in\R^{n\times n}$ is a symmetric matrix.}
Finally, for~$\mathbf{t}=\ln\mathbf{x}$, we have
\[
F(\mathbf{x})=\sum_{i,j=1}^n a_{ij}\,\ln x_i\,\ln x_j,
\]
which completes the proof.\end{itemize}
\end{proof}

{ \begin{Remark}{For $F$ to serve as a cost function (non-negative with $F(\mathbf{x}) = 0$ only at $\mathbf{x} = \mathbf{1}$)~\cite{WZ1}, the~matrix $\mathbf{A}$ must be positive definite.
}
\end{Remark}}

\begin{Remark}{In the case $c\neq 0$, the~$\cosh$ branch with $c>0$
and $\boldsymbol{\alpha}\in\R^n$
satisfies
\(
F(\mathbf{x})\ge 0
\)
with equality if and only if $\mathbf{x}=\mathbf{1}$.
Hence, this branch is compatible with the interpretation
of $F$ as a cost~function.}
\end{Remark}

When $c\neq 0$,~Equation \eqref{eq:ndim-law} is very restrictive.
By Theorem~\ref{thm:ndim-solutions}, the~function $F$
depends on $\mathbf{x}\in\Rp^n$
only through the expression
\(
\boldsymbol{\alpha}\cdot\ln\mathbf{x}.
\)
Thus, even in dimension $n$, the~effective dependence
is~one-dimensional.

In applications, cost functions on $\Rp^n$ 
are assumed to be additively separable,
reflecting independent contributions of different coordinates.
It is therefore natural to ask whether such separable forms
are compatible with the composition law \eqref{RCL}.

Suppose that $F$ has a form
\begin{equation}\label{sep}
F(\mathbf{x})=\sum_{k=1}^n f_k(x_k).
\end{equation}
If each $f_k$ satisfies the composition law in one variable,
then
\[
F(\mathbf{x}\cdot\mathbf{y})+F(\mathbf{x}/\mathbf{y})
=
2F(\mathbf{x})+2F(\mathbf{y})
+
c\sum_{k=1}^n f_k(x_k)f_k(y_k).
\]
However,
\[
P(F(\mathbf{x}),F(\mathbf{y}))
=
2F(\mathbf{x})+2F(\mathbf{y})
+
cF(\mathbf{x})F(\mathbf{y})
\]
contains additional mixed terms of the form
\[
c\sum_{k\ne j} f_k(x_k)f_j(y_j),
\]
which cannot vanish unless at most one component is nontrivial.
This indicates that $F$ given by \eqref{sep} is incompatible
with the  composition law \eqref{RCL} when $c\neq 0$.

More precisely, the~following statement~holds.

\begin{Theorem}
Let $c\neq 0$ and $n\ge 2$.
If $F:\Rp^n\to\R$ satisfies 
\[
F(\mathbf{x}\cdot\mathbf{y})+F(\mathbf{x}/\mathbf{y})
=
2F(\mathbf{x})+2F(\mathbf{y})
+
cF(\mathbf{x})F(\mathbf{y}),
\]
then $F$ cannot be written in the form \eqref{sep}
with two or more nonconstant components.
\end{Theorem}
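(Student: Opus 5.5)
The plan is to test the functional equation on pairs $\mathbf{x},\mathbf{y}$ that each move only a single coordinate, in two different coordinates, and to exploit the fact that the left-hand side is additively separable in the two moving variables while the right-hand side contains the product term $c\,F(\mathbf{x})F(\mathbf{y})$. No continuity or classification result (such as Theorem~\ref{thm:ndim-solutions}) is needed; the argument is purely algebraic.

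\textbf{Setup.} Suppose, for contradiction, that $F(\mathbf{x})=\sum_{k=1}^n f_k(x_k)$ with $f_k$ and $f_j$ nonconstant for some $k\neq j$. Put $C:=F(\mathbf{1})=\sum_i f_i(1)$ and $\varphi_i(s):=f_i(s)-f_i(1)$, so that $\varphi_k$ and $\varphi_j$ are nonconstant. For $a,b>0$ let $\mathbf{x}$ have $x_k=a$ and all other coordinates equal to $1$. Let $\mathbf{y}$ have $y_j=b$ and all other coordinates equal to $1$. Then $F(\mathbf{x})=C+\varphi_k(a)$ and $F(\mathbf{y})=C+\varphi_j(b)$. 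The vector $\mathbf{x}\cdot\mathbf{y}$ has $a$ in slot $k$ and $b$ in slot $j$, while $\mathbf{x}/\mathbf{y}$ has $a$ in slot $k$ and $1/b$ in slot $j$. Hence the left-hand side equals
\[
2C+2\varphi_k(a)+\varphi_j(b)+\varphi_j(1/b),
\]
which has the form $A(a)+B(b)$.

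\textbf{Expanding the right-hand side.} The right-hand side is
\[
2\bigl(2C+\varphi_k(a)+\varphi_j(b)\bigr)+c\bigl(C+\varphi_k(a)\bigr)\bigl(C+\varphi_j(b)\bigr).
\]
After expansion, every term is a function of $a$ alone or of $b$ alone, except the single term $c\,\varphi_k(a)\varphi_j(b)$.

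\textbf{Mixed second difference.} For $a_1,a_2,b_1,b_2>0$, form
\[
\Delta:=E(a_1,b_1)-E(a_1,b_2)-E(a_2,b_1)+E(a_2,b_2),
\]
where $E(a,b)$ denotes either side of the equation. Mixed differences annihilate every function of the form $A(a)+B(b)$. Consequently the left-hand side gives $\Delta=0$, while the right-hand side gives
\[
\Delta=c\bigl(\varphi_k(a_1)-\varphi_k(a_2)\bigr)\bigl(\varphi_j(b_1)-\varphi_j(b_2)\bigr).
\]
Because $\varphi_k$ and $\varphi_j$ are nonconstant, we can choose $a_1,a_2$ and $b_1,b_2$ so that both factors are nonzero. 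Since $c\neq0$, this forces $\Delta\neq0$, a contradiction.

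The only delicate point is the bookkeeping needed to confirm that the terms $cC\varphi_k(a)$, $cC\varphi_j(b)$ and $\varphi_j(1/b)$ are single-variable and therefore vanish in $\Delta$. This makes precise the informal "mixed terms" remark preceding the statement.
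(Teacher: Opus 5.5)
Your proof is correct, and it takes a genuinely different route from the paper.

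The paper first invokes the classification in Theorem~\ref{thm:ndim-solutions} to write $G(\mathbf{t})=\frac{2}{c}\bigl(\cosh(\boldsymbol{\alpha}\cdot\mathbf{t})-1\bigr)$ or its cosine analogue. It then computes the mixed partials $\partial_{t_j}\partial_{t_k}G(\mathbf{0})=\pm\frac{2}{c}\alpha_j\alpha_k$. Since a separable $G$ has vanishing mixed partials, it concludes that at most one $\alpha_k$ is nonzero.

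You never solve the equation. Instead you apply a finite mixed second difference to the equation itself, on test vectors supported in two distinct coordinates. This isolates the single non-separable term $c\,\varphi_k(a)\varphi_j(b)$ and forces it to vanish.

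Your route is more elementary and proves more. It needs no continuity, no normalization $F(\mathbf{1})=0$, no reciprocity, and no appeal to the cited classification of the $n$-dimensional d'Alembert equation. This matters because the statement as written assumes neither continuity nor $F(\mathbf{1})=0$, whereas the paper's proof tacitly uses both through Theorem~\ref{thm:ndim-solutions}. The case $F(\mathbf{1})\neq 0$ is harmless, since it forces $F\equiv -2/c$. Without continuity, however, there are non-regular d'Alembert solutions that the paper's argument does not cover.

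In exchange, the paper's route gives structural information. For continuous solutions, $F$ depends only on $\boldsymbol{\alpha}\cdot\ln\mathbf{x}$. Hence a separable solution depends on at most one coordinate, and its surviving component is of $\cosh$ or $\cos$ type.

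Your argument is in effect the rigorous form of the informal ``mixed terms'' heuristic given just before the theorem. It also drops that heuristic's extra assumption that each $f_k$ satisfies the one-variable law.
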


\begin{proof}
In logarithmic coordinates
\[
G(\mathbf{t})
=
\frac{2}{c}
\big(
\cosh(\boldsymbol{\alpha}\cdot\mathbf{t})-1
\big)
\quad\text{or}\quad
G(\mathbf{t})
=
\frac{2}{c}
\big(
\cos(\boldsymbol{\alpha}\cdot\mathbf{t})-1
\big).
\]
{We denote by $\mathbf{0}=(0,\dots,0)\in \mathbb{R}^n$ the zero vector. For~the hyperbolic branch, $G(\mathbf{t})=\frac{2}{c}(\cosh(\boldsymbol{\alpha}\cdot\mathbf{t})-1)$, and~for the trigonometric branch, $G(\mathbf{t})=\frac{2}{c}(\cos(\boldsymbol{\alpha}\cdot\mathbf{t})-1)$.} In either case, a~direct computation gives
\[
\frac{\partial^2 G}{\partial t_j\,\partial t_k}(\mathbf{0})
=
{\pm}\frac{2}{c}\,\alpha_j\alpha_k
\qquad (j\neq k),
\]
{where the sign $+$ corresponds to the hyperbolic branch and the sign $-$ to the trigonometric~branch.}

If $F(\mathbf{x})=\sum_k f_k(x_k)$,
then $G(\mathbf{t})=\sum_k g_k(t_k)$,
and therefore all mixed partial derivatives vanish.
Hence, $\alpha_j\alpha_k=0$ for all $j\neq k$.
 Thus, at most one component of $\boldsymbol{\alpha}$ is nonzero.
Consequently, $F$ depends on at most one coordinate,
so a decomposition with two nonconstant components is impossible.
\end{proof}

\begin{Corollary}
For $c\neq 0$, no additively separable cost with at least two nonconstant coordinate
components is compatible with the bilinear combiner.
\end{Corollary}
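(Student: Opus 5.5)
The corollary follows from the preceding theorem, so the plan is to reduce to it and then rule out the one residual case it leaves open.

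Suppose a cost $F(\mathbf{x})=\sum_{k=1}^n f_k(x_k)$ with at least two nonconstant components $f_j,f_k$ ($j\neq k$) satisfies \eqref{RCL} with $c\neq 0$. The preceding theorem says such an $F$ cannot be written in the form \eqref{sep} with two or more nonconstant components, so we obtain a contradiction at once. The corollary is a restatement of the theorem in cost-function language. Here "compatible with the bilinear combiner" means that \eqref{eq:ndim-law} holds with $P(u,v)=2u+2v+c\,uv$. By Theorem~\ref{thm:ndim-bilinear}, this is the only symmetric combiner of degree at most two available when $F(\mathbf{1})=0$.

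For completeness, I would verify that the theorem's argument applies to costs. The regularity assumed for a cost (continuity, $F(\mathbf{1})=0$) is exactly what Theorem~\ref{thm:ndim-solutions}(i) needs. That result gives $G(\mathbf{t})=\frac{2}{c}(\cosh(\boldsymbol{\alpha}\cdot\mathbf{t})-1)$ or $\frac{2}{c}(\cos(\boldsymbol{\alpha}\cdot\mathbf{t})-1)$, where $G(\mathbf{t})=F(e^{t_1},\dots,e^{t_n})$.

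The separable form gives $G(\mathbf{t})=\sum_k g_k(t_k)$ with $g_k(s)=f_k(e^s)$. Since $G$ is smooth, each $g_k$ is smooth, and the mixed partials $\partial_{t_j}\partial_{t_k}G$ vanish identically for $j\ne k$. On the other hand, the mixed partials of the explicit solutions at $\mathbf{0}$ equal $\pm\frac{2}{c}\alpha_j\alpha_k$. Hence at most one $\alpha_k$ is nonzero, so $G$ depends on a single coordinate $t_{k_0}$ only.

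The one step needing care is to deduce from this that every $f_j$ with $j\neq k_0$ is constant. Fix all coordinates except $t_j$. Then $G$ is constant in $t_j$, and separability gives $g_j(t_j)$ as $G$ minus terms not involving $t_j$. So $g_j$, and hence $f_j$, is constant. This contradicts the assumption of two nonconstant components, and the corollary follows.
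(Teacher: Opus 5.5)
Your proposal is correct and follows the paper's route. The corollary is a direct restatement of the preceding theorem, whose proof takes the classification of Theorem~\ref{thm:ndim-solutions}(i) and compares the mixed partials $\pm\frac{2}{c}\alpha_j\alpha_k$ at $\mathbf{0}$ with the identically vanishing mixed partials of a separable $G$, forcing $\alpha_j\alpha_k=0$ for $j\neq k$. Your final step, showing that each $g_j$ with $j\neq k_0$ is constant, spells out what the paper compresses into ``$F$ depends on at most one coordinate.''
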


In the following example, we provide a realization of the multidimensional
rigidity result.
We construct a $16$-dimensional system depending on two
parameters $(r,s)$ and show that the induced reciprocal cost depends
only on a single scalar~aggregate.

\begin{Example}\label{ex:16dim}

Let $(r,s)\in\Omega\subset\mathbb{R}^2$ and define
\[
\boldsymbol{\Phi}(r,s)
=
(\phi_1(r,s),\dots,\phi_{16}(r,s))
\in\mathbb{R}^{16}
\]
by
\[
\boldsymbol{\Phi}(r,s)
=
\bigl(
r,\ s,\ r+s,\ r-s,\ rs,\ r^2,\ s^2,\ r^2-s^2,\ 2rs,\ 
r^3,\ s^3,\ r^2s,\ rs^2,\ r^4,\ s^4,\ r^2s^2
\bigr).
\]
Let $\boldsymbol{\alpha}\in\mathbb{R}^{16}$ and set
\[
S(r,s)
=
\boldsymbol{\alpha}\cdot\boldsymbol{\Phi}(r,s).
\]
Define
\[
\mathbf{x}(r,s)
=
\bigl(
e^{\alpha_1\phi_1(r,s)},\dots,
e^{\alpha_{16}\phi_{16}(r,s)}
\bigr)
\in\Rp^{16}.
\]
Then
\[
\sum_{k=1}^{16}\ln x_k(r,s)
=
S(r,s),
\qquad
\prod_{i=1}^{16} x_i(r,s)
=
e^{S(r,s)}.
\]
The reciprocal cost on $\Rp^{16}$ is given by
\[
F(\mathbf{x})
=
\frac12\Bigl(R+R^{-1}\Bigr)-1,
\qquad
R=\prod_{i=1}^{16} x_i.
\]
Under the above parametrization, this becomes
\[
F(r,s)
=
\cosh\!\bigl(S(r,s)\bigr)-1.
\]
The reciprocal cost depends only on the single scalar quantity
\[
S(r,s)
=
\boldsymbol{\alpha}\cdot\boldsymbol{\Phi}(r,s)
=\sum_{k=1}^{16}\ln x_k.
\]
By Theorem \ref{thm:ndim-solutions},  the system collapses to a logarithmic direction for $c\neq0$.
{In the case $c\neq 0$, the~function $F(r,s)$ depends only on the scalar quantity 
$S(r,s)=\boldsymbol{\alpha}\cdot\boldsymbol{\Phi}(r,s)$. 
Hence, for~any two points $(r_1,s_1)$ and $(r_2,s_2)$ such that 
$S(r_1,s_1)=S(r_2,s_2)$, we have 
$F(r_1,s_1)=F(r_2,s_2)$. 
Therefore, $F$ is constant along the level sets of $S$, and~the dependence on $(r,s)$ reduces effectively to one dimension. 
Here, the~level sets of $S$ are the sets 
\[
\{(r,s)\in \Omega : S(r,s)=\gamma\}, \quad \gamma\in\mathbb{R}.
\]}
The collapse and no-collapse regimes are illustrated in Figure~\ref{fig:collapse-vs-additive}.

\begin{figure}
\includegraphics[width=0.48\textwidth]{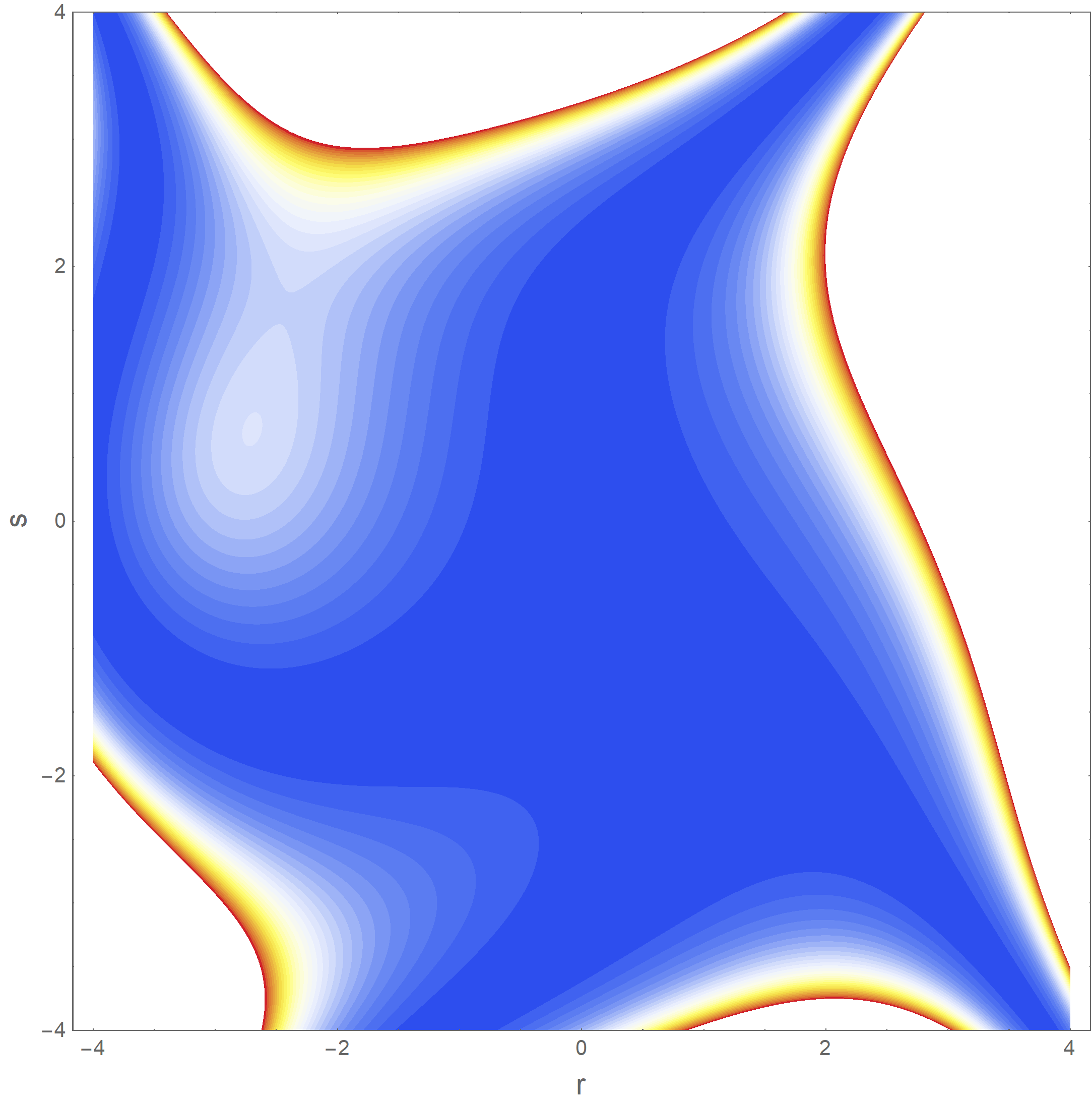}
\hspace{8pt}
\includegraphics[width=0.48\textwidth]{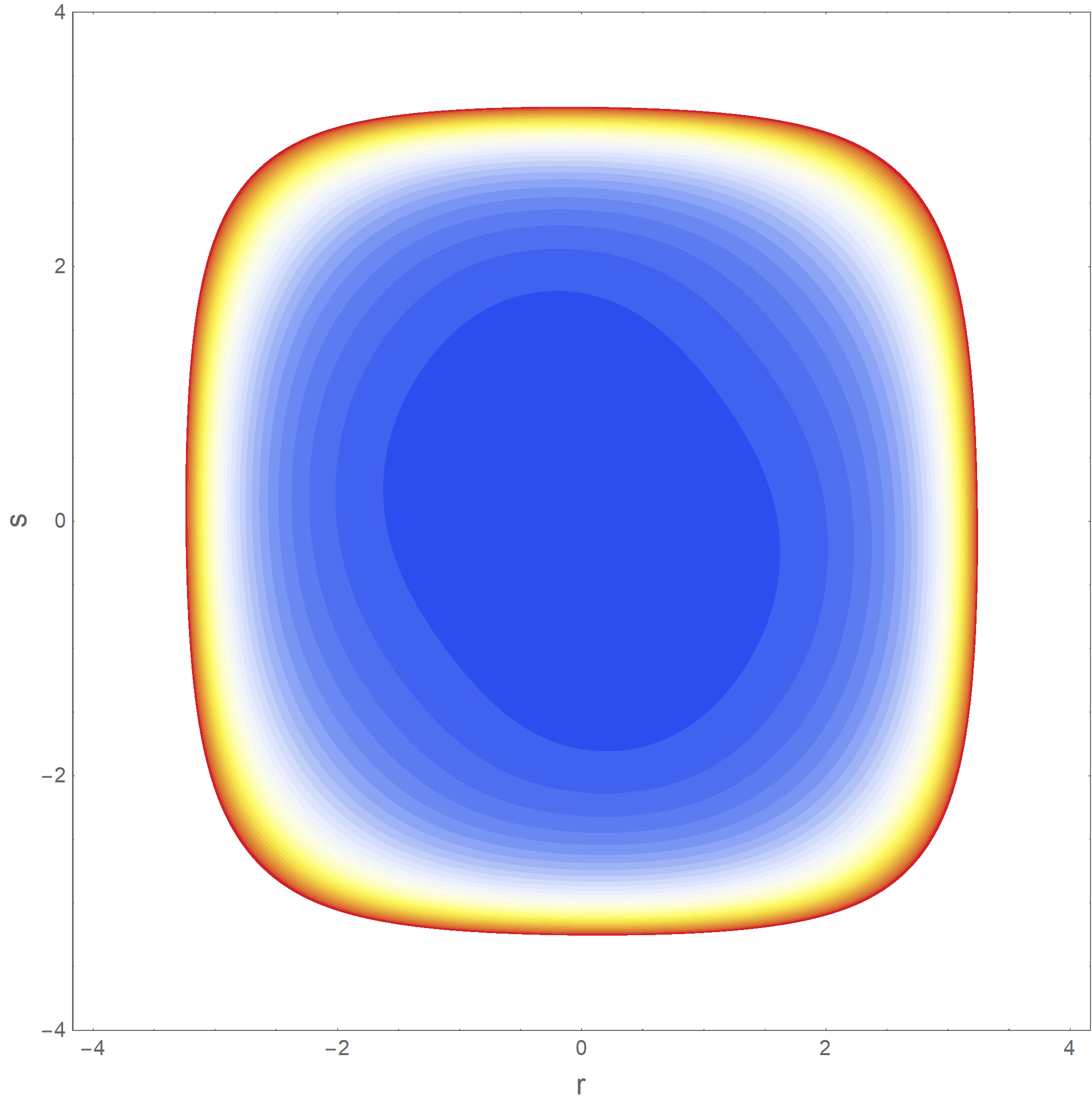}
\caption{(\textbf{Left}) {collapse} 
 regime ($c\neq 0$), where $F$ depends only on $S(r,s)$ and is constant on its level sets, reducing the dependence to one dimension. (\textbf{Right}) no-collapse regime, where $F$ depends genuinely on $(r,s)$.}
\label{fig:collapse-vs-additive}
\end{figure}
\end{Example}

\begin{Remark}\label{rem:ndim-separate}{If we consider a different combiner $P_k$ in each coordinate,
\[
F(\mathbf{x}\cdot_k \mathbf{y})
+
F(\mathbf{x}\mathbin{/_{k}}\mathbf{y})
=
P_k\big(F(\mathbf{x}),F(\mathbf{y})\big),
\]
where only the $k$-th component is modified,
then necessarily
\[
P_k(u,v)=2u+2v+c_k\,uv
\]
for each $k$.
The case $c_1=\dots=c_n$ reduces to the consideration above.
The compatibility of unequal parameters $c_k$ remains open.}
\end{Remark}

\section{Canonical Coefficient~Selection}\label{sec:calibration}

The previous sections establish that symmetry and the polynomial composition law
uniquely force the one-parameter bilinear family~\eqref{eq:F-bilinear-law}
for some real constant $c\in\R$.


We now show that a natural local normalization selects one distinguished member of this~family.

\begin{Definition}
Let $F : \mathbb{R}_{>0} \to \mathbb{R}$. 
The log-curvature of $F$, denoted $\kappa(F)$, is defined as
\[
\kappa(F) := \lim_{t \to 0} \frac{2 F(e^{t})}{t^{2}},
\]
provided this limit exists.     
\end{Definition}

{When the limit exists, $\kappa(F)$ is the quadratic scaling coefficient of $F(e^{t})$ at $t = 0$. 
This does not assume {a priori} that $F$ is $C^{2}$. 
The existence of the limit provides the required~regularity.}

By the change of variables $x = e^{t}$, the~limit exists if and only if
\[
\lim_{x \to 1} \frac{2 F(x)}{(\ln x)^{2}}
\]
exists, and~in that case, the two limits~coincide.

Assume that the limit  $\kappa(F)$ exists.
Then necessarily $F(1)=0$, since otherwise the quotient 
$\frac{2F(e^t)}{t^2}$ diverges as $t \to 0$. Set $G(t)=F(e^t)$. If~$G$ is twice differentiable at $0$, then
\[
\kappa(F)=G''(0).
\]
The calibration condition $\kappa(F)=1$ means
\[
G(t)=\tfrac12 t^2 + o(t^2)
\qquad (t\to 0).
\]

\medskip

We now determine how this calibration constrains
the parameter $c$.
\begin{Theorem}
Let $F$ be a continuous nonconstant solution of
\eqref{eq:F-bilinear-law} with $c\neq 0$.
Assume that $F$ belongs to the hyperbolic branch described in
Theorem~\ref{thm:classification}, that is,
\[
F(e^t)=\frac{2}{c}\big(\cosh(\alpha t)-1\big)
\quad \text{for some } \alpha>0.
\]
If $\kappa(F)=1$, then $c=2\alpha^2$.
\end{Theorem}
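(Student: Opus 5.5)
The plan is to compute $\kappa(F)$ directly from the explicit hyperbolic form and then solve the equation $\kappa(F)=1$ for $c$. By hypothesis, $G(t):=F(e^t)=\frac{2}{c}\bigl(\cosh(\alpha t)-1\bigr)$ with $\alpha>0$ and $c\neq 0$. The definition of $\kappa(F)$ is a limit as $t\to 0$, so only the behaviour of $G$ near $0$ matters.

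\textbf{Step 1: expand $G$ near $0$.} Use the Taylor expansion $\cosh(\alpha t)-1=\tfrac12\alpha^2t^2+O(t^4)$ as $t\to0$. Alternatively, since $G$ is smooth, use the remark preceding the theorem that $\kappa(F)=G''(0)$ whenever $G$ is twice differentiable at $0$. This gives
\[
\frac{2F(e^t)}{t^2}=\frac{2}{t^2}\cdot\frac{2}{c}\Bigl(\tfrac12\alpha^2t^2+O(t^4)\Bigr)=\frac{2\alpha^2}{c}+O(t^2).
\]
The same value comes from $G''(0)=\frac{2}{c}\alpha^2\cosh(0)=\frac{2\alpha^2}{c}$. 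In either form the limit exists, and
\[
\kappa(F)=\frac{2\alpha^2}{c}.
\]

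\textbf{Step 2: impose the calibration.} The condition $\kappa(F)=1$ becomes $\frac{2\alpha^2}{c}=1$. Since $c\neq0$ this is legitimate to rearrange, giving $c=2\alpha^2$. As a consistency check, $c>0$, which agrees with Proposition~\ref{prop:sign-c}.

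The argument has no real obstacle. The only point needing care is that the limit defining $\kappa(F)$ genuinely exists, and the explicit expansion settles this. It may also be worth noting afterwards that combining $c=2\alpha^2$ with the convexity condition $\alpha\ge1$ from Corollary~\ref{cor:convex-case} gives $c\ge 2$. The specific value $\alpha=1$ then gives $c=2$ and the canonical cost $F(x)=\tfrac12(x+x^{-1})-1$.
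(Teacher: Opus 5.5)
Your proposal is correct and follows essentially the same route as the paper: expand $\cosh(\alpha t)-1=\tfrac12\alpha^2t^2+o(t^2)$, obtain $\kappa(F)=2\alpha^2/c$, and solve $\kappa(F)=1$ to get $c=2\alpha^2$. The paper first passes through $H=1+\tfrac{c}{2}G$ and the d'Alembert equation, but that detour is unnecessary given the explicit form assumed in the hypothesis.
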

\begin{proof}
Let $G(t)=F(e^t)$. For~$c\neq 0$, define 
\(
H(t)=1+\frac{c}{2}G(t).
\)
By Lemma~\ref{lem:affine-reduction}(i), $H$ satisfies the classical
d'Alembert Equation \eqref{dal}. By~assumption, we are in the hyperbolic branch,
so
\[
H(t)=\cosh(\alpha t),
\qquad \alpha>0.
\]

Hence
\[
G(t)=\frac{2}{c}\big(H(t)-1\big)
=\frac{2}{c}\big(\cosh(\alpha t)-1\big).
\]
Using the Taylor expansion at $t=0$,
\[
\cosh(\alpha t)-1=\frac{\alpha^2 t^2}{2}+o(t^2),
\]
we compute
\[
\kappa(F)
=
\lim_{t\to 0}\frac{2F(e^t)}{t^2}
=
\lim_{t\to 0}\frac{2G(t)}{t^2}
=
\frac{4}{c}
\lim_{t\to 0}\frac{\cosh(\alpha t)-1}{t^2}
=
\frac{4}{c}\cdot\frac{\alpha^2}{2}
=
\frac{2\alpha^2}{c}.
\]
Therefore, the~calibration condition $\kappa(F)=1$
implies $c=2\alpha^2$.
\end{proof}

We now combine the d’Alembert Inevitability Theorem~\ref{thm:bilinear-forced}
with the solution classification in Theorem~\ref{thm:classification}
and the calibration condition.
\pagebreak
\begin{Theorem}\label{thm:calibrated-inevitability}
Assume:
\begin{itemize}
\item[(i)] $F$ is continuous and nonconstant;
\item[(ii)] $F$ satisfies the bilinear composition law ~\eqref{eq:F-bilinear-law};
\item[(iii)] $F$ is convex and nonnegative on $(0,\infty)$;
\item[(iv)] $\kappa(F)=1$.
\end{itemize}
Then $F$  belongs to the hyperbolic family
\begin{equation}\label{eq:hyperbolic-family}
F_\alpha(x)
=
\frac{1}{\alpha^2}
\big(
\cosh(\alpha \ln x)-1
\big),
\qquad \alpha\ge 1,
\end{equation}
and 
\[
c=2\alpha^2.
\]
\end{Theorem}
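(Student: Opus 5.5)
The plan is to split according to whether the parameter $c$ in \eqref{eq:F-bilinear-law} vanishes, to rule out $c=0$ using convexity, and then to chain Corollary~\ref{cor:convex-case} with the calibration theorem proved just above.

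First, the normalization. By (iv) the limit $\kappa(F)$ exists, and as observed after the definition of $\kappa$ this forces $F(1)=0$. Hence the bilinear law \eqref{eq:F-bilinear-law} together with Theorem~\ref{thm:classification} applies to $F$. Reciprocity $F(x)=F(1/x)$ is available from Lemma~\ref{lem:swap-quotients}, and it is in any case visible in the explicit solution formulas.

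Second, I exclude $c=0$; I expect this to be the main obstacle, since Corollary~\ref{cor:convex-case} assumes $c\neq0$. If $c=0$, Theorem~\ref{thm:classification}(ii) gives $F(x)=k(\ln x)^2$. A direct computation yields
\[
F''(x)=\frac{2k\,(1-\ln x)}{x^2}.
\]
This changes sign at $x=e$ unless $k=0$. Convexity in (iii) therefore forces $k=0$, so $F$ is constant, contradicting (i). Alternatively, $\kappa(F)=2k$, so the calibration in (iv) already gives $k=\tfrac12\neq0$, and convexity fails. Either way $c\neq0$.

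Third, with $c\neq0$, Corollary~\ref{cor:convex-case} (whose hypotheses hold by (i)--(iii) and reciprocity) gives $c>0$ and places $F$ on the hyperbolic branch $F(e^t)=\tfrac{2}{c}(\cosh(\alpha t)-1)$ with $\alpha\ge1$. Here we replace $\alpha$ by $|\alpha|$, since $\cosh$ is even, and $\alpha\neq0$ by nonconstancy. The cosine branch is excluded in the same corollary because it is not convex.

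Finally, I invoke the preceding theorem on the calibration with $\alpha>0$: the condition $\kappa(F)=1$ gives $c=2\alpha^2$. Substituting $2/c=1/\alpha^2$ and $t=\ln x$ yields
\[
F(x)=\frac{1}{\alpha^2}\bigl(\cosh(\alpha\ln x)-1\bigr),\qquad \alpha\ge1,
\]
which is \eqref{eq:hyperbolic-family}, and $c=2\alpha^2$ as claimed. Nonnegativity is consistent with $c>0$, by Proposition~\ref{prop:sign-c}.
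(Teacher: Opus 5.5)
Your proof is correct and follows essentially the same route as the paper. Both arguments use the classification together with convexity and nonnegativity (Corollary~\ref{cor:convex-case}) to land on the hyperbolic branch with $\alpha\ge 1$, then read off $c=2\alpha^2$ from $\kappa(F)=2\alpha^2/c$. Your explicit exclusion of $c=0$ is a worthwhile addition: there $F=k(\ln x)^2$ has $F''(x)=2k(1-\ln x)/x^2$, which changes sign unless $k=0$, contradicting (i) and (iii). The paper's proof applies the classification as if only the hyperbolic and trigonometric branches could occur and never treats $c=0$.
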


\begin{proof}
Passing to logarithmic coordinates $G(t)=F(e^t)$
and applying Lemma~\ref{lem:affine-reduction},
\mbox{Equation \eqref{eq:F-bilinear-law}} reduces to the classical d'Alembert equation.
By Theorem~\ref{thm:classification},
all continuous solutions are either hyperbolic or~trigonometric.

Since $F$ is convex and nonnegative on $(0,\infty)$, $F$ must belong to the hyperbolic branch,
so that
\[
G(t)=F(e^t)
=
\frac{2}{c}\big(\cosh(\alpha t)-1\big)
\quad \text{for some } \alpha>0.
\]
The log-curvature is
\[
\kappa(F)
=
\lim_{t\to 0}\frac{2F(e^t)}{t^2}
=
\frac{2\alpha^2}{c}.
\]
Using $\kappa(F)=1$ gives
\[
c=2\alpha^2.
\]
Substituting this relation into the expression for $F$
yields
\begin{equation}\label{rrr}
F_\alpha(e^t)
=
\frac{1}{\alpha^2}
\big(\cosh(\alpha t)-1\big).    
\end{equation}
Finally, global convexity of $F$ on $(0,\infty)$
is equivalent to $\alpha\ge 1$
by Corollary~\ref{cor:convex-case}.
\end{proof}
\medskip

The parameter $\alpha$ reflects a multiplicative
rescaling of the logarithmic coordinate.
The representation \eqref{rrr} can be written equivalently as
\[
F_\alpha(x)
=
\frac{1}{\alpha^2}
F_1(x^\alpha).
\]
Thus, $\alpha$ does not introduce a new structural type of solution;
it corresponds only to a rescaling of the coordinate $t=\ln x$.
Without loss of generality, we may therefore assume $\alpha=1$.

\begin{Corollary}\label{cor:canonical-cost}
Under the assumption of
Theorem~\ref{thm:calibrated-inevitability},
after normalization of the multiplicative coordinate,
the canonical representative is
\[
F(x)=\frac12(x+x^{-1})-1.
\]
\end{Corollary}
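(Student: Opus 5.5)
The plan is to deduce this corollary directly from Theorem~\ref{thm:calibrated-inevitability}, using the rescaling identity $F_\alpha(x)=\alpha^{-2}F_1(x^\alpha)$ noted just after that theorem. Under hypotheses (i)--(iv), the theorem already places $F$ in the hyperbolic family \eqref{eq:hyperbolic-family}, with some $\alpha\ge 1$ and $c=2\alpha^2$.

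The first step is to make precise what ``normalization of the multiplicative coordinate'' means. The natural choice is the change of variables $\tilde x := x^\alpha$, which in logarithmic coordinates is $\tilde t=\alpha t$. Together with it I would rescale the cost by $\tilde F(\tilde x):=\alpha^2 F(x)$, so that $\tilde F(\tilde x)=F_1(\tilde x)$.

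Next I would check that $\tilde F$ still satisfies hypotheses (i)--(iv), now with $\alpha=1$.
\begin{itemize}
\item Continuity, nonconstancy, nonnegativity and $\tilde F(1)=0$ are immediate.
\item The composition law survives, but with a new parameter. Since $x\mapsto x^\alpha$ is a group automorphism of $\Rp$, substituting into \eqref{eq:F-bilinear-law} and multiplying through by $\alpha^2$ gives \eqref{eq:F-bilinear-law} for $\tilde F$ with parameter $\tilde c=c/\alpha^2=2$.
\item The log-curvature is preserved: $\kappa(\tilde F)=\lim_{\tilde t\to0}2\alpha^2F(e^{\tilde t/\alpha})/\tilde t^2=\kappa(F)=1$.
\item Convexity holds by Corollary~\ref{cor:convex-case}, since $\alpha=1$ satisfies $\alpha\ge1$.
\end{itemize}

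Finally, with $\alpha=1$ and $c=2$, I would compute explicitly: $F_1(x)=\cosh(\ln x)-1=\tfrac12(x+x^{-1})-1$. As a cross-check, Corollary~\ref{cor:cost-interpretation} with $c=2$, $\alpha=1$ gives the same expression.

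The only step needing care is the interpretation: the corollary implicitly fixes $\alpha=1$ through the choice of coordinate. It does not claim that every $\alpha\ge1$ yields the canonical cost without rescaling. I would state explicitly that the representative is canonical up to the rescaling $(x,F)\mapsto(x^\alpha,\alpha^2F)$. Without that rescaling of $F$, the curvature condition $\kappa=1$ would not be preserved.
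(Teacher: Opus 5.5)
Your proposal is correct and follows the paper's route. The paper likewise uses the rescaling $F_\alpha(x)=\alpha^{-2}F_1(x^\alpha)$ to set $\alpha=1$ in \eqref{eq:hyperbolic-family}, which gives $c=2$ and $F(x)=\cosh(\ln x)-1=\tfrac12(x+x^{-1})-1$; your check that $(x,F)\mapsto(x^\alpha,\alpha^2F)$ preserves the hypotheses (with $\tilde c=c/\alpha^2=2$ and $\kappa$ unchanged) makes rigorous the ``without loss of generality'' step that the paper only asserts.
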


\begin{proof} If we set $\alpha=1$ in
\eqref{eq:hyperbolic-family},
we have $c=2$ and
\[
F(e^t)=\cosh(t)-1
=
\frac12(e^t+e^{-t})-1.
\]
Returning to multiplicative coordinates $x=e^t$
yields
\[
F(x)=\frac12(x+x^{-1})-1.
\]
\end{proof}

\begin{Remark} If $c=0$, then the composition law \eqref{eq:F-bilinear-law}
reduces to  the additive branch,
and the classification yields
\[
F(x)=k(\ln x)^2.
\]
In this case,
\[
\kappa(F)=2k,
\]
so the normalization $\kappa(F)=1$
forces $k=\tfrac12$,
giving
\[
F(x)=\tfrac12(\ln x)^2.
\]
This provides a unit-curvature solution
in the additive regime,
which lies outside the bilinear ($c\neq 0$)~family.
\end{Remark}

 \section{Conclusions}

In this paper, we studied continuous nonconstant functions 
\(F:\Rp\to\R\) satisfying a symmetric polynomial composition law
\[
F(xy)+F\Big(\frac xy\Big)=P(F(x),F(y)).
\]

We first considered the case of higher-degree polynomial combiners.
Theorem~\ref{thm:degree-exclusion} shows that symmetric combiners with $\deg P\ge3$
are incompatible with the functional equation under a 
non-cancellation condition.
In particular, the~ cubic case 
in Example~\ref{exmldeg3} admits no nonconstant continuous solutions. 
Consequently, only polynomial combiners of degree at most two
can admit nontrivial continuous~solutions.

In the quadratic case, the~combiner $P$ is necessarily of the form
\[
P(u,v)=2u+2v+c\,uv,\qquad c\in\R.
\]

We also showed that symmetry of \(P\) is equivalent to reciprocity \(F(x)=F(1/x)\),
and that the normalization \(F(1)=0\) implies
\(P(u,0)=2u\) and \(P(0,v)=2v\).
For a given continuous nonconstant solution $F$, the~combiner is~unique.

Passing to logarithmic coordinates reduces the composition law
to the classical d'Alembert functional equation.
This gives a complete classification of continuous solutions:
the hyperbolic and trigonometric branches,
and the quadratic logarithmic case when \(c=0\).

In the $n$-dimensional case, we showed that the classification of \(P\)
remains unchanged.
For \(c\neq0\), every solution depends only on a single scalar
combination \(\boldsymbol{\alpha}\cdot\ln\mathbf{x}\).
As a consequence, additive separability
\(F(\mathbf{x})=\sum_k f_k(x_k)\)
is impossible for \(n\ge2\).

Finally, we introduced the log-curvature calibration
\(
\kappa(F)
\)
and proved that, for~nonnegative convex solutions with \(c\neq0\)
and \(\kappa(F)=1\),
the solutions belong to the family
\[
F_\alpha(x)=\frac{1}{\alpha^2}\big(\cosh(\alpha\ln x)-1\big),
\qquad c=2\alpha^2.
\]
After normalization $\alpha=1$, this gives the canonical reciprocal cost
\[
F(x)=\frac12\bigl(x+x^{-1}\bigr)-1.
\]
Thus, the~canonical reciprocal cost is uniquely determined by the structural~constraints.

{Several natural questions remain open for further investigation.
These include the classification of asymmetric polynomial combiners,
the stability of the polynomial composition law in the Hyers--Ulam sense,
and the multidimensional case with distinct coordinate~parameters.}


\vspace{6pt}

\noindent{\bf Author contributions.} Conceptualization, J.W.; Methodology, J.W. and M.Z.; Software, J.W.; Validation, J.W., M.Z., and E.A.; Formal Analysis, M.Z. and J.W.; Investigation, J.W., M.Z., and E.A.; Resources,
J.W.; Writing-Original Draft Preparation, J.W.; Writing-Review and Editing, M.Z., J.W., and E.A.; Funding Acquisition, J.W.



\begin{thebibliography}{99}
 

 

\bibitem{Aczel}
Acz\'el, J. 
\textit{Lectures on Functional Equations and Their Applications};
Academic Press: New York, NY, USA, 1966.

\bibitem{AczelDhombres}
Acz\'el, J.; Dhombres, J. 
\emph{Functional Equations in Several Variables};
Cambridge University Press:{Cambridge, UK, New York, New Rochelle,
Melbourne Sydney} 
 1989.

\bibitem{Akkouchi}
Akkouchi, M. 
A note on d’Alembert’s functional equation.
\emph{Ann. Math. Blaise Pascal} \textbf{2001}, \emph{8}, 1--6.


\bibitem{Czerwik}
Czerwik, S. 
\emph{Functional Equations and Inequalities in Several Variables};
World Scientific: {Singapore}, 2002.

\bibitem{Davison}
Davison, T.M.K. 
D’Alembert’s functional equation and Chebyshev polynomials.
\emph{Ann. Acad. Paed. Cracov. Studia Math.} \textbf{2001}, \emph{4}, 31--38.

\bibitem{dAlembert1769}
d'Alembert, J. 
M\'emoire sur les principes de m\'ecanique.
\emph{Hist. Acad. Sci. Paris} \textbf{1769}, {278--286.} 


\bibitem{EbanksStetkaer}
Ebanks, B.; Stetk{\ae}r, H. 
d’Alembert’s other functional equation.
\emph{Publ. Math. Debr.} \textbf{2015}, \emph{87}, 319--349.


\bibitem{Hyers}
Hyers, D.H. 
On the stability of the linear functional equation.
\emph{Proc. Natl. Acad. Sci. USA} \textbf{1941}, \emph{27}, 222--224.

\bibitem{Kannappan}
Kannappan, P. 
On the Functional Equation $f(x + y) + f(x - y) = 2f(x)f(y)$.
 \emph{Am. Math. Mon.} \textbf{1965}, \emph{72}, 374--377.

\bibitem{Kannappan2}
Kannappan, P. 
{\it Functional Equations and Inequalities with Applications};
Springer: {Berlin/Heidelberg, Germany}, 2009.

\bibitem{Kuczma}
Kuczma, M. 
\emph{An Introduction to the Theory of Functional Equations and Inequalities},
2nd ed.; Birkh\"auser: Berlin, Germany, 2009.

\bibitem{Papp}
Papp, F.J. 
The D'Alembert functional equation.
\emph{Amer. Math. Mon.} \textbf{1985}, \emph{92}, 273--275.

\bibitem{Picard1922}
Picard, C.-E. 
Deux le\c{c}ons sur certaines \'equations fonctionnelles et la g\'eom\'etrie non-euclidienne.
\emph{Bull. Soc. Math. Fr}. \textbf{1922}, \emph{46}, 404--416,425--432.

\bibitem{Poisson1804}
Poisson, S. 
Du parall\'elogramme des forces.
\emph{Corresp. Sur L'\'Ecole Polytech.} \textbf{1804}, \emph{1}, 356--360.



\bibitem{StetkaerBook}
Stetk{\ae}r, H. 
\emph{Functional Equations on Groups};
World Scientific: {Singapore}, 2013. 

\bibitem{WZ1}
Washburn, J.; Zlatanović, M. Uniqueness of the Canonical Reciprocal Cost. \emph{Mathematics} \textbf{2026}, \emph{14}, 935. 



\end{thebibliography}
\end{document}